\DeclarePairedDelimiter\abs{\lvert}{\rvert}%
\DeclarePairedDelimiter\norm{\lVert}{\rVert}%
\let\oldabs\abs
\def\abs{\@ifstar{\oldabs}{\oldabs*}}
\let\oldnorm\norm
\def\norm{\@ifstar{\oldnorm}{\oldnorm*}}
\g@addto@macro\bfseries{\boldmath}
\newcommand{\A}{\mathcal{A}}
\newcommand{\Mho}{\mathcal{M}^0_{h}}
\newcommand{\Mhdo}{\mathcal{M}^0_{h,d}}
\newcommand{\Mh}{\mathcal{M}_h}
\newcommand{\Mhd}{\mathcal{M}_{h,d}}
\newcommand{\res}{\text{res}_h}
\newcommand{\co}{\text{core}_h}
\newcommand{\resLOG}{\text{res}}
\newcommand{\coLOG}{\text{core}}
\newcommand{\N}{\mathcal{N}}
\newcommand{\T}{\mathbb{T}}
\newcommand{\conj}[1]{\overline{#1}}
\newcommand{\D}{\mathbb{D}}
\newcommand{\Po}{\mathcal{P}}
\newcommand{\cD}{\conj{\mathbb{D}}}
\newcommand{\m}{\textit{m}}
\newcommand{\hil}{\mathcal{H}}
\newcommand{\hb}{\mathcal{H}(b)}
\renewcommand\Re{\operatorname{Re}}
\newtheorem{thm}{Theorem}[section]
\newtheorem{lemma}[thm]{Lemma}
\newtheorem{prop}[thm]{Proposition}
\newtheorem*{thmA}{Theorem A}
\newtheorem*{thmB}{Theorem B}
\newtheorem*{thmC}{Theorem C}
\newtheorem*{corD}{Corollary D}
\theoremstyle{definition}
\theoremstyle{definition}
\newtheorem{remark}[thm]{Remark}
\newtheorem{definition}{Definition}[section]
\newcommand{\Addresses}{{
		\bigskip
		\footnotesize
		
		
		
		Bartosz Malman, \\ \textsc{KTH Royal Institute of Technology, \\
			Stockholm, Sweden}\\
			\texttt{malman@kth.se}		
	}}
\begin{document}
\title{\textbf{Thomson decompositions of measures in the disk}}
\author{Bartosz Malman}
\date{ }
\maketitle

\begin{abstract}
We study the classical problem of identifying the structure of $\Po^2(\mu)$, the closure of analytic polynomials in the Lebesgue space $L^2(\mu)$ of a compactly supported Borel measure $\mu$ living in the complex plane. In his influential work, Thomson showed that the space decomposes into a full $L^2$-space and other pieces which are essentially spaces of analytic functions on domains in the plane. For a family of measures $\mu$ supported on the closed unit disk $\cD$ which have a part on the open disk $\D$ which is similar to the Lebesgue area measure, and a part on the unit circle $\T$ which is the restriction of the Lebesgue linear measure to a general measurable subset $E$ of $\T$, we extend the ideas of Khrushchev and calculate the exact form of the Thomson decomposition of the space $\Po^2(\mu)$. It turns out that the space splits according to a certain decomposition of measurable subsets of $\T$ which we introduce. We highlight applications to the theory of the Cauchy integral operator and de Branges-Rovnyak spaces.
\end{abstract}

\section{Introduction and main results}
\label{introsec}
\subsection{Background}

Let $\mu$ be finite positive Borel measure in the complex plane $\mathbb{C}$ which is compactly supported, and consider the usual Lebesgue space $L^2(\mu)$ of equivalence classes of Borel measurable functions which are square-integrable with respect to $\mu$. The set of \textit{analytic polynomials} $\Po$ consists of functions $p$ of the form $p(z) = \sum_{n=0}^N p_n z^n$, where the coefficients $p_n$ are complex numbers, and $z = x+iy$ is the complex variable. The norm closure of $\Po$ in $L^2(\mu)$ is denoted by $\Po^2(\mu)$. The goal of this paper is to determine the structure of $\Po^2(\mu)$ for a family of relatively simple, but important in the applications, measures $\mu$ living in the closed unit disk $\cD := \{ z \in \mathbb{C} : |z| \leq 1 \}$.

We recall two basic examples of such polynomial closures that appear in analysis which turn out to very accurately illustrate what occurs in general.

\begin{enumerate}[(i)]
    \item Let $\mu$ be the restriction to the interval $[0,1]$ of the linear Lebesgue measure $dx$ on the real line $\mathbb{R}$. Then a consequence of the classical Weierstrass polynomial approximation theorem is that the polynomials $p(x) = \sum_{n=0}^N p_nx^n$ are dense in space of square-integrable function on $[0,1]$. Thus $\Po^2(\mu) = L^2(\mu)$ in this case.
    
    \item Instead now, let $\mu = \m$ be the linear Lebesgue measure on the unit circle $\T := \{ z \in \mathbb{C} : |z| = 1 \}$. The corresponding closure $\Po^2(\mu)$ is well-known to not coincide with the full space $L^2(\mu) = L^2(\T)$, but instead with the Hardy space $H^2$, consisting of functions on $\T$ with vanishing negative portion of their Fourier series. The space contains no non-trivial characteristic functions (the trivial ones correspond to the constant functions $0$ and $1$). Each function $f \in H^2 = \Po^2(\mu)$ determines an analytic function in the open unit disk $\D := \{ z \in \mathbb{C} : |z| < 1 \}$, and appropriately defined non-tangential boundary values of this extension coincide with $f$ on $\T$. The magnitude of the values in $\D$ of the extended function is controlled by the size of $f$ on the carrier set of the defining measure, this carrier set being in this case $\T$. More precisely, for each $\lambda \in \D$, there exists a constant $C_\lambda > 0$ such that \begin{equation}
        \label{bpeform} |f(\lambda)| \leq C_\lambda \|f\|_{L^2(\mu)}.
    \end{equation} It is well-known that, in fact, we have $C_\lambda = (1-|\lambda|^2)^{-1/2}$. 
    
    \item Let us now restrict the Lebesgue measure $m$ on $\T$ to proper subset $A$ which is an arc of the circle: $d\mu = 1_A \, d\m$, where $1_A$ is the characteristic function of $A$. In this case, $\Po^2(\mu) = L^2(\mu)$, similarly to $(i)$ above. 
\end{enumerate}

If \eqref{bpeform} holds for some point $\lambda$, then the operation taking $f \in \Po^2(\mu)$ to $f(\lambda)$ is called a \textit{bounded point evaluation}. The operation is called an \textit{analytic bounded point evaluation} if an entire open neighbourhood of $\lambda$ admits a uniform such bound. A long line of research on existence of bounded point evaluations culminated in the work of Thomson in \cite{thomson1991approximation}, who proved that the condition $\Po^2(\mu) = L^2(\mu)$ is the only obstruction to existence of analytic bounded point evaluations for $\Po^2(\mu)$. He proves that for any measure $\mu$, there exists a partition $\mu = \mu_0 + \sum_{n \geq 1} \mu_i$, where each $\mu_i$ is the restriction of $\mu$ to a subset of its support, and a decomposition \begin{equation} \label{thomsondecomp}
    \Po^2(\mu) = L^2(\mu_0) \oplus \Big( \oplus_{i \geq 1} \Po^2(\mu_i)\Big)
\end{equation} such that for $i \geq 1$, $\Po^2(\mu_i)$ contains no non-trivial characteristic functions, and there exists an open simply connected set $U_i$ consisting of analytic bounded point evaluations for $\mu_i$. For $i \geq 1$, the pieces $\Po^2(\mu_i)$ are thus essentially spaces of analytic functions on the domain $U_i$. The deep work of Aleman, Sundberg and Richter in \cite{aleman2009nontangential} complements Thomson's results, and provides important information about the growth, zero sets and boundary behaviour of functions in such \textit{"analytic"} $\Po^2(\mu)$-spaces on the unit disk, i.e., when the corresponding open set of analytic point evaluations is the unit disk $\D$. 

For clarity, we mention that Thomson and Aleman, Richter and Sundberg, prove their main results also in the non-Hilbertian context, with exponents $t \neq 2$ and Lebesgue spaces $L^t(\mu)$. In this paper, we shall focus on the case $t = 2$, to avoid a few rather inessential technical details. We deal with a family of measures of the form \begin{equation} \label{muform1}
d\mu = (1-|z|)^{\alpha} dA + 1_E \, d\m,
\end{equation} where $dA$ is the Lebesgue area measure of the open unit disk $\D$, $d\m$ is the Lebesgue linear measure of the circle $\T$, and $1_E$ is the characteristic (or indicator) function of a general measurable subset $E$ of $\T$. In order for the measure to be finite, we need the restriction $\alpha > -1$. The parameter $\alpha$ will not play an important role, unlike the parameter $E$ which will be at the center of our attention. For this class of measures, we seek the exact form of the decomposition appearing in \eqref{thomsondecomp}. An \textit{"analytic"} piece $\Po^2(\mu_i)$ will appear in the decomposition \eqref{thomsondecomp} of our measure, since the part $\mu_\D := (1-|z|)^{\alpha}dA$ of the measure ensures that $U = \D$ will consist of analytic bounded point evaluations for $\Po^2(\mu)$. The point in question is if a full $L^2$-piece appears in the decomposition, and if so, what is its exact structure. 

\subsection{Work of Khrushchev and our main result}

To motivate and explain our main theorem, we recall the work of Khrushchev in \cite{khrushchev1978problem} which implies some partial results of the type that we are interested in. We assume that $\mu$ has the form \eqref{muform1}, and we will explain the structure of $\Po^2(\mu)$ for different special choices of the set $E$.

\begin{enumerate}[(i)]
\setcounter{enumi}{3}
    \item Assume that $E$ in \eqref{muform1} is an arc. In this case, the space $\Po^2(\mu)$ will contain no $L^2$-summand. To see this, consider the usual Carleson curvilinear rectangle $S := \{ z \in \D : 1-|z| > |E|, z/|z| \in E\},$ and $\phi: \D \to S$ a conformal mapping. A simple computation involving the harmonic measure of $S$ will show that for any sequence of polynomials bounded in the norm of $\Po^2(\mu)$, the sequence $p \circ \phi$ must be bounded in the Nevanlinna class $\N$ of the unit disk. Then the classical Khinchin–Ostrowski theorem (see \cite{havinbook} for precise definitions), which connects the values in $\D$ to values in $\T$ for converging sequences of functions in $\N$, ensures that any convergent sequence of polynomials in $\Po^2(\mu)$ cannot have a non-trivial characteristic function as its limit.
    
    \item Khrushchev in \cite{khrushchev1978problem} and Kegejan in \cite{kegejanex} extend the idea in the previous point to sets more general than intervals. If $E$ is a closed subset of $\T$, and $\T \setminus E = \cup_{n} \ell_n$ is the decomposition of its complement into disjoint open arcs $\ell_n$, then the set $E$ is called a \textit{Beurling-Carleson set} if \begin{equation}
        \label{BCdefintro}
        \sum_{n} |\ell_n|\log(1/|\ell_n|) < \infty.
    \end{equation} For such a set, Khrushchev's approach in \cite{khrushchev1978problem} is to carefully estimate the harmonic measure of a certain domain $\Omega \subset \D$ constructed from the set $E$ and conclude, as before, that any sequence of analytic polynomials bounded in the $\Po^2(\mu)$-norm cannot simultaneously converge pointwise to zero on $\D$ and a non-zero quantity on $\T$ (or vice versa). Thus, if $E$ in \eqref{muform1} is a Beurling-Carleson set, then again $\Po^2(\mu)$ contains no non-trivial characteristic functions (we note that Khrushchev in \cite{khrushchev1978problem} does not work with $\Po^2(\mu)$-spaces, but with certain Banach spaces, but his results are directly applicable to our situation). 
    
    \item Conversely, assume that $E$ in \eqref{muform1} is a measurable subset of $\T$ which \textit{contains no Beurling-Carleson subsets of positive Lebesgue measure}. Under this assumption, Khrushchev constructs a sequence of analytic polynomials that simultaneously converges pointwise to $1$ on $\D$, to $0$ on $E$, and is bounded in the norm of $\Po^2(\mu)$, with $\mu$ given by \eqref{muform1}. It follows that $\Po^2(\mu)$ contains $L^2(E)$ as a summand.
\end{enumerate}
    
Consideration of the above examples leads us to the following definition. Let $E$ be a measurable subset of $\T$, and consider the quantity \begin{equation}\label{CsupremumBC}
C := \sup_{G} \Big\{ |G| : |G \setminus E| = 0, G \text{ is a Beurling-Carleson set } \Big\}.
\end{equation}
That is, we compute the supremum of the Lebesgue measures of Beurling-Carleson sets $G$ which are \textit{almost contained} in $E$, in the sense that the set difference $|G \setminus E|$ has Lebesgue measure zero. Let $\{G_n\}_n$ be a sequence of such Beurling-Carleson sets for which the supermum in \eqref{CsupremumBC} is attained: $\lim_{n \to \infty} |G_n| = C$. We define \begin{equation} \label{coreintrodef}
\coLOG(E) := \cup_n G_n,
\end{equation} and call this set for the \textit{core} of $E$. The set $\coLOG(E)$ is the biggest subset of $E$ which can be synthesized from Beurling-Carleson sets. We define also \begin{equation} \label{resintrodef}
\resLOG(E) := E \setminus \coLOG(E),
\end{equation} which we call for the \textit{residual} of $E$. Then $E$ equals the union of its core and its residual, up to a set $N$ of Lebesgue measure zero. This partition of $E$ will be called for the \textit{core-residual decomposition} of $E$. It is not hard to see that the decomposition is unique, up to sets of Lebesgue measure zero. 

The main result of this paper is that the spaces $\Po^2(\mu)$ under consideration split up according to the core-residual decomposition.

\begin{thmA} Let $d\mu = (1-|z|)^{\alpha}dA + 1_E \,d\m$, where $\alpha > -1$ and $E$ is any measurable subset of $\T$. Let \begin{equation} \label{dmu1def}
    d\mu_1 := d\mu - 1_{\resLOG(E)}d\m = (1-|z|)^{\alpha}dA + 1_{\coLOG(E)}\, d\m.
\end{equation} Then 
\begin{equation} \Po^2(\mu) = L^2(\resLOG(E)) \oplus \Po^2(\mu_1),
\end{equation} where $\Po^2(\mu_1)$ contains no non-trivial characteristic functions.
\end{thmA}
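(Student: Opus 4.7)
Write $R := \resLOG(E)$ and $C := \coLOG(E)$. The theorem has two components: the orthogonal decomposition $\Po^2(\mu) = L^2(R) \oplus \Po^2(\mu_1)$, and the absence of non-trivial characteristic functions in $\Po^2(\mu_1)$. The heart of the argument is the single assertion $1_R \in \Po^2(\mu)$, which I expect to be the main obstacle; everything else will reduce to routine Hilbertian approximation and the Khrushchev-Kegejan analysis already recalled in the introduction.

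\textbf{Central step: $1_R \in \Po^2(\mu)$.} Since $R$ contains no Beurling-Carleson subset of positive measure, Khrushchev's construction from case (vi) applied to the smaller measure $\nu := (1-|z|)^\alpha dA + 1_R d\m$ produces polynomials $p_n$ bounded in $L^2(\nu)$ that converge pointwise to $1$ on $\D$ and to $0$ on $R$; by a Fatou/Mazur convex-combination argument this upgrades to norm convergence, giving $1_\D \in \Po^2(\nu)$ and hence $1_R = 1 - 1_\D \in \Po^2(\nu)$. The delicate task is to upgrade this to $L^2(\mu)$-convergence, which requires additional control of $\int_C |p_n|^2 d\m$. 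I would attempt this by exploiting that Khrushchev's construction produces polynomials with uniform $L^\infty(\cD)$-bounds (coming from the harmonic measure of specific slit subdomains of $\D$), and by combining with Szegő's density theorem for analytic polynomials in $L^2(E, d\m)$ (valid since $|\T\setminus E|>0$ in the nontrivial case), so that a product or mollified convex combination can be arranged to simultaneously approximate $1$ on $\D$, $1$ on $R$, and $0$ on $C$ in the requisite norms. A parallel approach is dual: for $g \in L^2(\mu)$ with $\bar g d\mu$ annihilating polynomials, the Cauchy transform $F$ vanishes outside $\cD$, satisfies $\bar\partial F = \pi \bar g (1-|z|)^\alpha$ on $\D$, and has boundary trace proportional to $\bar g 1_E$; the BC-free structure of $R$ combined with Khinchin-Ostrowski-type harmonic-measure estimates should then force $\bar g = 0$ on $R$. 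Once $1_R \in \Po^2(\mu)$ is established, the full inclusion $L^2(R) \subset \Po^2(\mu)$ follows by multiplying approximating polynomials by arbitrary analytic polynomials and invoking Szegő's density theorem in $L^2(R, d\m)$.

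\textbf{Assembling the decomposition.} Given $L^2(R) \subset \Po^2(\mu)$, the polynomials $q_n := 1 - p_n$ converge to $1_{\D \cup C}$ in $L^2(\mu)$. For any polynomial $P$, $Pq_n \to P \cdot 1_{\D \cup C}$ in $L^2(\mu)$, showing that the extension-by-zero-on-$R$ map from $\Po$ equipped with the $L^2(\mu_1)$-norm into $L^2(\mu)$ is an isometry with image in $\Po^2(\mu)$; its closure identifies $\Po^2(\mu_1)$ with a closed subspace of $\Po^2(\mu)$ orthogonal to $L^2(R)$. Conversely, if $f \in \Po^2(\mu)$ is orthogonal to $L^2(R)$ then $f = 0$ $\m$-a.e.\ on $R$, and polynomial approximations $P_n \to f$ in $L^2(\mu)$ satisfy $\|P_n - f\|_{L^2(\mu_1)} \leq \|P_n - f\|_{L^2(\mu)} \to 0$, placing $f$ in $\Po^2(\mu_1)$. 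Hence $\Po^2(\mu_1)$ coincides with the orthogonal complement of $L^2(R)$ inside $\Po^2(\mu)$.

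\textbf{Absence of characteristic functions in $\Po^2(\mu_1)$.} Let $1_F \in \Po^2(\mu_1)$. Since $\D$ is a set of analytic bounded point evaluations for $\Po^2(\mu_1)$, the analytic extension of $1_F$ to $\D$ is an idempotent analytic function, hence identically $0$ or $1$; swapping $F$ with its complement if necessary, we may assume $F \subset C$ up to null sets. Write $C = \bigcup_k G_k$ for an increasing sequence of Beurling-Carleson sets synthesising $C$, and take polynomials $P_n \to 1_F$ in $L^2(\mu_1)$. Restriction to the smaller measure $(1-|z|)^\alpha dA + 1_{G_k} d\m$ gives $P_n \to 1_{F \cap G_k}$ (extended by zero on $\D$) in that norm, producing a characteristic function in $\Po^2((1-|z|)^\alpha dA + 1_{G_k} d\m)$. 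By case (v) of the introduction this space admits no non-trivial characteristic functions, so $|F \cap G_k| = 0$ for every $k$, and hence $|F| = 0$.
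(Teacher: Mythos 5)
Your overall architecture matches the paper's: establish $L^2(\resLOG(E))\subset\Po^2(\mu)$, identify $\Po^2(\mu_1)$ with the orthogonal complement, and kill characteristic functions by restricting to the Beurling--Carleson sets $G_k$ that synthesize $\coLOG(E)$ and invoking the known non-splitting result for such sets. The last two blocks of your argument are correct and essentially identical to what the paper does. The problem is the central step, where you have correctly located the difficulty --- controlling the approximants on $C=\coLOG(E)$ --- but not resolved it.

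The gap is this: neither of your two proposed mechanisms for the upgrade from $L^2(\nu)$ to $L^2(\mu)$ works as stated. First, Khrushchev's polynomials from case (vi) cannot satisfy uniform $L^\infty(\cD)$ bounds: a sequence of polynomials uniformly bounded on $\cD$ and converging pointwise to $1$ on $\D$ converges weakly to $1$ in $L^2(\T)$, which is incompatible (by dominated convergence against a test function supported on $R$) with pointwise convergence to $0$ on a set $R$ of positive measure. So the premise of your ``product/mollification'' route is false; moreover, the Szeg\H{o}-type correction factor you would multiply by must itself be close to $1$ on $R$ and close to $0$ on $C$, which is another instance of the very problem being solved. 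Second, the dual route via Cauchy transforms ends with ``should then force $\bar g=0$ on $R$'' with no argument, and the paper's own example $E=\T$, $R\subset\T$ arbitrary BC-free, shows that no argument using only the BC-freeness of $R$ can exist: there $\Po^2(\mu)=H^2$ and $L^2(R)\not\subset\Po^2(\mu)$. Whatever replaces this step must use how $R$ sits inside $E$ as its residual. In the paper this is exactly the role of \thref{mainCRestimate} (the residual is ``invisible'' to the Hausdorff content $\Mho$ relative to covers of $I\setminus E$), which feeds into the construction of \thref{fnsequenceprop}: one builds real functions $f_n$ supported relative to the \emph{whole} set $E$, vanishing on $\coLOG(E)$, tending to $-\infty$ on $\resLOG(E)$, with $\int_\Delta f_n\,d\m\le 2h(|\Delta|)$; the outer-type functions $g_k=\exp(H_{f_{n_k}}/k)$ then have boundary modulus \emph{exactly} $1$ on $\coLOG(E)$, tend to $0$ on $\resLOG(E)$, and obey the growth bound needed for $L^2(\mu_\D)$-boundedness. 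That exact control on $\coLOG(E)$ is the ingredient your proposal is missing, and it is the technical heart of the theorem.
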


By $L^2(\resLOG(E))$ above we mean the Lebesgue space of square-integrable functions on the set $\resLOG(E)$. It should be interpreted as the zero space if $E$ is residual-free: $|\resLOG(E)| = 0$. A consequence of the fact that $\Po^2(\mu_1)$ contains no non-trivial characteristic functions is that every function $f \in \Po^2(\mu_1)$ has an analytic restriction to $\D$, and this restriction is a one-to-one mapping from $\Po^2(\mu_1)$ to analytic functions in $\D$. 

Our result here is a generalization of Khrushchev's (when specialized to the Hilbertian setting). Indeed, the two examples $(v)$ and $(vi)$ above correspond, more or less, to the cases $|\coLOG(E)| = |E|$ and $|\resLOG(E)| = |E|$, respectively. In the proofs, we do use Khrushchev's general strategy, so our approach to his results is not new. Rather we extend his ideas to the general situation in which the residual and the core of the set $E$ are simultaneously of positive measure. 

We should make a comment on the nature of the set $\resLOG(E)$ to better explain the difficulty that arises in our setting. Indeed, this set has the property found in example $(vi)$ above: it contains no Beurling-Carleson subsets of positive Lebesgue measure. This property of a set $F$ alone allows Khrushchev to construct a sequence of polynomials which simultaneously converges to 1 on $\D$, to 0 on $F$ and is bounded in the $\Po^2(\mu)$-norm if $F = E$ in \eqref{muform1}. But the case $F$ being a proper subset of $E$ is different, and the mentioned property of $F$ alone (i.e., containing no Beurling-Carleson subsets of positive measure) cannot in general single-handedly allow for the existence of a non-trivial $L^2(F)$-summand in the space $\Po^2(\mu)$. Indeed, consider a rather banal example $E = \T = F \cup (\T \setminus F)$. In this case,  $\Po^2(\mu)$ certainly contains no $L^2$-summands, and $\Po^2(\mu) = H^2$ as sets, with equality of norms. This example highlights the perhaps obvious fact that $F$ being a residual set is \textit{not} an intrinsic property of $F$, but is tightly related to the surrounding set $E$ that the residual is produced with respect to. The \textit{"splitting off"} of the piece $L^2(\resLOG(E))$ from $\Po^2(\mu)$ is in fact related to the geometric way in which $\resLOG(E)$ is contained in $E$, see \thref{mainCRestimate} below and the remark following it.

The theorem presented here is a specialization to the simplest and most important case of our most general result, which is \thref{p2musplittheorem} below. For most of the paper, we adopt a convention similar to Khrushchev's in \cite{khrushchev1978problem} and work in the more general case of sets which we call \textit{$h$-Carleson} (where \eqref{hcarlesondef} replaces the definition \eqref{BCdefintro} presented above), generalized core-residual decompositions, and measures $\mu$ which are compatible with the new definition. The generalization is not deeper than replacing the function $t \log(1/t)$, which is doing the work in \eqref{BCdefintro}, by any function $h(t)$ with similar properties. This change adds no technical troubles, and we would say that it rather helps to emphasize the essential parts in the proofs.

\subsection{Application I: functions with sparse support and strong unilateral decay of Fourier coefficients}

One of Khrushchev's main applications of his results, which we described in the examples $(v)$ and $(vi)$ above, was in the study of the existence question for the following type of functions. Consider $f(\zeta) = \sum_{n \in \mathbb{Z}} f_n\zeta^n \in L^2(\T)$ which vanishes on the measurable subset $\T \setminus E$; \begin{equation}
    \label{fsupportappl}
    f(\zeta) = 0, \text{ for almost every } \zeta \in \T \setminus E,
\end{equation} and yet satisfies a unilateral rapid decay condition on its Fourier coefficients: 
\begin{equation} \label{coefficientdecay}
    \lim_{n \to +\infty} |f_n|n^K = 0, \quad \text{ for all } K \geq 0.\end{equation}
In other words, we look at of those square-integrable functions $f$ which live only on $E$, and their \textit{Cauchy transforms} 
\begin{equation} \label{Cfeq}
C_f(z) := \int_{\T} \frac{f(\zeta)}{1-z\conj{\zeta}} d\m(\zeta) = \sum_{n \geq 0} f_nz^n, \quad z \in\D   \end{equation}
are members of the algebra $\A^\infty$ of analytic functions in $\D$ which extend continuously to $\cD$, and so do their derivatives of any order. Let us define 
\begin{equation}
    \label{Splusdef}
    S_+(E) := \Big\{ f \in L^2(E) : \eqref{coefficientdecay} \text{ holds } \Big\}.
\end{equation} 
It is not hard to see that a bilateral version of \eqref{coefficientdecay} cannot hold for all possible sets $E$. A Fourier series with such bilateral rapid decay defines a function of the class $C^\infty$, and a closed set $E$ which is nowhere dense in $\T$ cannot support even a continuous function on $\T$. Khrushchev found, by non-constructive methods, that $S_+(E)$ admits a non-zero function essentially if and only if $E$ contains a Beurling-Carleson subset of positive Lebesgue measure. He showed also that if $E$ contains no Beurling-Carleson subsets of positive Lebesgue measure, then $S_+(E) = \{0\}$. 

In \cite{ConstrFamiliesSmoothCauchyTransforms}, a construction of a non-zero function $f \in S_+(E)$ has been presented, whenever $E$ is a Beurling-Carleson set. From that construction it follows easily that $S_+(E)$ is not only non-trivial, but even norm-dense in $L^2(E)$, whenever $E$ is a Beurling-Carleson set. A natural follow-up is to try to characterize the measurable subsets $E$ of $\T$ for which the set $S_+(E)$ is norm-dense in $L^2(E)$. The investigation carried out in the present paper leads us to the following conclusion.

\begin{thmB}
Let $E$ be a measurable subset of $\T$ of positive Lebesgue measure. Then the norm-closure of the set $S_+(E)$ coincides with $L^2(\coLOG(E))$. Hence the set $S_+(E)$ is dense in $L^2(E)$ if and only if $|\resLOG(E)| = 0$.
\end{thmB}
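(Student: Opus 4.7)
The plan is to prove the two inclusions making up the equality $\overline{S_+(E)} = L^2(\coLOG(E))$ separately. For $L^2(\coLOG(E)) \subseteq \overline{S_+(E)}$, I invoke the definition of $\coLOG(E)$: by construction there is an increasing sequence $\{G_n\}$ of Beurling-Carleson sets with $|G_n \setminus E| = 0$ and $\bigcup_n G_n = \coLOG(E)$ up to null sets (monotonicity may be arranged by replacing $G_n$ with $G_1 \cup \cdots \cup G_n$, which remains Beurling-Carleson because $t \log(1/t)$ is increasing for small $t$). From the construction in \cite{ConstrFamiliesSmoothCauchyTransforms}, $S_+(G_n)$ is dense in $L^2(G_n)$. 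Any $g \in S_+(G_n)$ vanishes off $G_n$, and since $\T \setminus G_n \supset \T \setminus E$ modulo null sets, also off $E$; hence $g \in S_+(E)$. Consequently $L^2(G_n) \subseteq \overline{S_+(E)}$ for every $n$, and the desired inclusion follows from the density of $\bigcup_n L^2(G_n)$ in $L^2(\coLOG(E))$.

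For the reverse inclusion, apply Theorem A with $\alpha = 0$ and $\mu = dA + 1_E \, d\m$, yielding $\Po^2(\mu) = L^2(\resLOG(E)) \oplus \Po^2(\mu_1)$. Fix $f \in S_+(E)$ and a bounded $\phi$ supported on $\resLOG(E)$; by the decomposition there exist polynomials $p_n \to \phi$ in $L^2(\mu)$-norm. The $L^2(dA)$-component of this convergence forces $p_n \to 0$ uniformly on compact subsets of $\D$ (by the subharmonic mean value inequality applied to $|p_n|^2$), so the Taylor coefficients satisfy $(p_n)_k \to 0$ as $n \to \infty$ for each fixed $k$. The central computation exploits that $p_n$ has only non-negative Fourier frequencies, so the anti-analytic part of $f$ contributes nothing to the pairing:
\begin{equation*}
\int_E p_n \overline{f} \, d\m \;=\; \int_\T p_n \overline{C_f} \, d\m \;=\; \sum_{k \geq 0} (p_n)_k \overline{f_k}.
\end{equation*}
The left-hand side converges to $\int_{\resLOG(E)} \phi \overline{f} \, d\m$ by the $L^2(E, d\m)$-part of the convergence. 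On the right-hand side, a split at a large index $N$ handles the two halves: $\sum_{k \leq N} (p_n)_k \overline{f_k} \to 0$ as $n \to \infty$ because each $(p_n)_k \to 0$, while $\sum_{k > N} (p_n)_k \overline{f_k}$ is dominated using the rapid decay $|f_k| = o(k^{-K})$ together with a uniform bound on $\|p_n\|_{L^2(\T)}$. Letting $\phi$ range over bounded test functions on $\resLOG(E)$ then forces $f = 0$ a.e. on $\resLOG(E)$.

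The main technical hurdle is the uniform $L^2(\T)$-bound on $\{p_n\}$: the $L^2(\mu)$-convergence controls only $\|p_n\|_{L^2(\D, dA)}$ and $\|p_n\|_{L^2(E, d\m)}$, and leaves $\|p_n\|_{L^2(\T \setminus E, d\m)}$ a priori unbounded. One expects this bound to come out of the Khrushchev-type construction underlying the proof of Theorem A, where the polynomials arise as Cesaro-type approximants to powers of an outer function $\theta \in H^\infty$ with $|\theta| \leq 1$ on $\cD$; such approximants inherit a uniform $L^\infty(\T)$-bound from $\theta$. Extracting this property from the proof of Theorem A, or establishing a strengthened version of Theorem A that records it, is the critical step tying the rapid-decay hypothesis on $f$ to the analytic structure of $\Po^2(\mu)$.
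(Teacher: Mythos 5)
Your forward inclusion is fine and is essentially the paper's (which simply cites the construction in \cite{ConstrFamiliesSmoothCauchyTransforms} for Beurling--Carleson sets and exhausts $\coLOG(E)$ by such sets). The reverse inclusion, however, has a genuine gap at exactly the point you flag, and the repair you propose cannot work. You want a uniform bound on $\|p_n\|_{L^2(\T)}$ for the polynomials $p_n \to \phi$ produced by Theorem A. Such a bound is not merely hard to extract from the proof of Theorem A --- it is provably false whenever $|\resLOG(E)|>0$, which is the only case at issue. Indeed, if $\sup_n\|p_n\|_{L^2(\T)}<\infty$ while $\|p_n\|_{L^2(\D,dA)}\to 0$, then every fixed Taylor coefficient of $p_n$ tends to $0$, so $p_n\to 0$ weakly in $H^2$ (head/tail splitting against a fixed $g\in H^2$) and hence weakly in $L^2(\T)$; testing against $1_{\resLOG(E)}$ would give $\int_{\resLOG(E)}p_n\,d\m\to 0$, contradicting the strong convergence $p_n\to 1_{\resLOG(E)}$ in $L^2(E,d\m)$ when one takes $\phi=1_{\resLOG(E)}$. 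So no strengthened version of Theorem A can record the property you are hoping for.

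The fix is to abandon the Hardy pairing and the splitting at $N$ altogether. The paper (Lemma \ref{lemmaForTheoremB}) estimates the whole sum at once by Cauchy--Schwarz with the weights $(k+1)^{\pm1}$:
\begin{equation*}
\Big|\sum_{k\geq 0}(p_n)_k\conj{f_k}\Big|
\leq\Big(\sum_{k\geq 0}(k+1)|f_k|^2\Big)^{1/2}\Big(\sum_{k\geq 0}\frac{|(p_n)_k|^2}{k+1}\Big)^{1/2}
\lesssim\Big(\int_\D|C_f'|^2\,dA\Big)^{1/2}\Big(\int_\D|p_n|^2\,dA\Big)^{1/2}.
\end{equation*}
The first factor is finite because $f\in S_+(E)$ --- in fact the much weaker hypothesis $\sum_k k|f_k|^2<\infty$, i.e.\ finiteness of the Dirichlet integral of $C_f$, suffices, which is why the paper obtains the stronger statement advertised after Theorem B --- and the second factor tends to zero by the $\mu_\D$-part of the $L^2(\mu)$-convergence, which is precisely the quantity your argument does control. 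With this replacement your computation closes: $\int_{\resLOG(E)}\phi\conj{f}\,d\m=0$ for all bounded $\phi$ supported on $\resLOG(E)$, hence $f=0$ a.e.\ there. In short, your strategy coincides with the paper's, but the tail of the pairing must be run against the Bergman norm of $p_n$, not its Hardy norm.
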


In fact, the condition \eqref{coefficientdecay} can be very significantly weakened, and theorem as stated above still holds true even for this weaker condition. For instance, if $|\resLOG(E)| > 0$, then even functions $f$ living on $E$ for which the Dirichlet integral of $C_f$ is finite, will not be dense in $L^2(E)$. As usual, if $g$ is analytic in $\D$, then its Dirichlet integral is the quantity \begin{equation} \label{Dirichletintegral}
\int_\D |g'(z)|^2 \, dA(z).
\end{equation}  We will establish this stronger versions in the proofs below. 

Our proof of this stronger version, which is presented in Section \ref{proofTBsec}, has the following curious consequence. Assume that we have a function $f \in L^2(\T)$, and we have some information regarding where $f$ lives: \begin{equation}
    \label{Appl1fcarrier}
    f(\zeta) = 0, \text{ for almost every } \zeta \in \T \setminus E,
\end{equation} Assume further that we can cut the Fourier series of the function $f$ into two pieces with semi-bounded spectrum $$f(\zeta) = \sum_{n < N} f_n \zeta^n + \sum_{n \geq N} f_n\zeta^n := f_-(\zeta) + f_+(\zeta)$$ and measure that one of the pieces, say $f_+$, has \textit{finite logarithmic energy} (see \cite{havinbook}). Namely, 
\begin{equation}
    \label{Appl1f2finenergy}
    \sum_{n \geq N} n|f_n|^2 < \infty.
\end{equation}
In that case, we can improve our information on the localization of $f$ to 
\begin{equation}
\label{Appl1fcarrierImproved}
    f(\zeta) = 0, \text{ for almost every } \zeta \in \T \setminus \coLOG(E).
\end{equation}

\subsection{Application II: approximations in $\hb$-spaces}

Theorem B has a different interpretation in the context of the de Branges-Rovnyak spaces $\hb$. The space $\hb$ can be defined as the unique Hilbert space of analytic functions on $\D$ with a reproducing kernel of the form \begin{equation}
    \label{hbkernel} k_b(z,\lambda) = \frac{1-\conj{b(\lambda)}b(z)}{1-\conj{\lambda}z}, \quad z,\lambda \in \D.
\end{equation} Here $b: \D \to \D$ is an analytic self-map of the unit disk $\D$. This class of spaces is important for its connections to the model theory of general linear operators on Hilbert space, and also for its various connections to complex function theory. One can consult the monographs \cite{sarasonbook}, \cite{hbspaces1fricainmashreghi} and \cite{hbspaces2fricainmashreghi} for more background.

It is in general hard to give explicit formulas for any inhabitants of a given space $\hb$, except for the kernel functions in \eqref{hbkernel}. Thus, of special importance are results which explain what various functions enjoying some additional regularity properties can be found in a given space $\hb$, and to what extend these functions can be used to approximate a general $\hb$-function. With this in mind, we present a version of Theorem B which is specialized to the context of de Branges-Rovnyak spaces. The set $E$ in the statement below is defined, up to a subset of Lebesgue measure zero, by the usual non-tangential boundary values of the analytic function $b$.

\begin{thmC}
Let $b: \D \to \D$ be analytic, and consider the measurable set $$E := \{ \zeta \in \T : |b(\zeta)| < 1 \}.$$ A necessary condition for the functions with finite Dirichlet integral in \eqref{Dirichletintegral} to be norm-dense in $\hb$ is that $|\resLOG(E)| = 0$.
\end{thmC}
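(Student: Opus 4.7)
The proof plan is to transfer the density question in $\hb$ across an isometric identification with a space of the form $\Po^2(\mu_b)$ covered by Theorem A, and then invoke the strengthened form of Theorem B.

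When $b$ is inner we have $|E|=0$ and the conclusion is automatic, so we may assume $|E|>0$. The main case is $\log(1-|b|^2)\in L^1(\T)$ (non-extreme), where Sarason's theory guarantees polynomial density in $\hb$ and where $\hb$ can be realized isometrically as a polynomial closure $\Po^2(\mu_b)$ in a Lebesgue space whose measure has a $\D$-part that is an area measure and a boundary part $(1-|b|^2)\,d\m|_E$. This is a measure of the weighted type addressed by the more general form of Theorem A mentioned at the end of Section 1.2. Under the identification, polynomials correspond to polynomials, so norm-closures of polynomial subspaces transport faithfully between $\hb$ and $\Po^2(\mu_b)$.

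A function $g\in\hb$ with finite Dirichlet integral $\int_\D|g'(z)|^2\,dA(z)<\infty$ lies in $H^2$, and the Fourier coefficients of its boundary function satisfy $\sum_{n\geq 0}n|g_n|^2<\infty$; this is exactly the finite logarithmic energy hypothesis \eqref{Appl1f2finenergy} appearing in the strengthened form of Theorem B. Decomposing $\Po^2(\mu_b)=L^2(\resLOG(E))\oplus\Po^2(\mu_{b,1})$ via Theorem A, I would argue that the $L^2(\resLOG(E))$-component of $g$ must vanish: otherwise the boundary function $1_E\cdot g^*$ would be an $L^2(\T)$ element supported on $E$ whose positive-spectrum part has finite logarithmic energy yet whose support meets $\resLOG(E)$ on a set of positive measure, violating the displayed consequence \eqref{Appl1fcarrierImproved}. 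Hence every Dirichlet-integrable element of $\hb$ corresponds to an element of $\Po^2(\mu_{b,1})$, and so does the entire norm-closure of such functions. Density in $\hb\cong\Po^2(\mu_b)$ thus forces the orthogonal summand $L^2(\resLOG(E))$ to be trivial, i.e., $|\resLOG(E)|=0$.

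The main obstacle is the isometric identification $\hb\cong\Po^2(\mu_b)$ itself: this is an item of structure theory for $\hb$-spaces (as developed in works of Sarason, Aleman, and others) rather than a consequence of Theorem A, and care is needed to see that the boundary part of $\mu_b$ is carried precisely by $E=\{|b|<1\}$ and that the polynomial norms match. A secondary obstacle is the extreme case $|E|>0$ with $\log(1-|b|^2)\notin L^1(\T)$, where polynomial density in $\hb$ fails; one can try either to strip off the inner factor of $b$ to reduce to the non-extreme case, or to construct directly a bounded functional on $\hb$ vanishing on all Dirichlet-integrable elements by pairing against a test function supported on $\resLOG(E)$, parallel to the $\Po^2(\mu_b)$-argument above.
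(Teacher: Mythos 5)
There is a genuine gap, and it sits exactly where you placed your ``main case.'' If $b$ is non-extreme, then $\log(1-|b|^2)\in L^1(\T)$ forces $|b|<1$ almost everywhere, so $|E|=|\T|$ and $E$ coincides with $\T$ up to a null set; since $\T$ is itself a Beurling--Carleson set, $\resLOG(E)$ is automatically of measure zero and Theorem~C says nothing in this regime. The entire content of the theorem lives in the extreme case (in particular whenever $|b|=1$ on a set of positive measure, or $1-|b|^2$ decays non-integrably in the logarithm on parts of $E$), which is precisely the case you defer to a ``secondary obstacle.'' Neither of your two suggested escapes works as stated: stripping off the inner factor of $b$ does not change extremality, because extremality depends only on $|b|$ on $\T$, which is unaffected by inner factors; and ``pairing against a test function supported on $\resLOG(E)$'' presupposes the very identification of $\hb$ with a space carrying an $L^2(\resLOG(E))$-summand that you have not established. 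On top of this, the isometric identification $\hb\cong\Po^2(\mu_b)$ with $d\mu_b = dA + (1-|b|^2)\,d\m$ is not a standard fact and is not what the literature provides: for non-extreme $b$ the $\hb$-norm of a polynomial $p$ is $\|p\|_{H^2}^2+\|p^+\|_{H^2}^2$ with $T_{\conj{a}}p^+=T_{\conj{b}}p$, which is not the $\Po^2(\mu_b)$-norm. What the paper actually uses is a duality theorem from \cite{DBRpapperAdem}: density of the Dirichlet-integrable functions in $\hb$ forces $M_z$ on $\Po^2(dA+(1-|b|^2)\,d\m)$ to have no nontrivial invariant subspace on which it acts isometrically, and \thref{p2musplittheorem} produces exactly such a subspace, $L^2(1_{\resLOG(E)}\cdot\mu_\T)$, whenever $|\resLOG(E)|>0$. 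That cited theorem is the bridge your proposal is missing.

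If you want to avoid the black box of \cite{DBRpapperAdem}, the paper's second argument shows how to work entirely in the extreme case: with $\Delta=\sqrt{1-|b|^2}$, every $f\in L^2(E)$ gives $C_{f\Delta}\in\hb$, the map $f\mapsto C_{f\Delta}$ is injective with $\|C_{f\Delta}\|_{\hb}\le\|f\|_{L^2(E)}$, and $T_{\conj{b}}$ maps $\hb$ contractively into this range while preserving the Dirichlet class. Approximating $C_{1_E\Delta}$ by Dirichlet-integrable $g_n$ and writing $T_{\conj{b}}g_n=C_{f_n\Delta}$, one gets $f_n\to\conj{b}$ in $L^2(E)$, while \thref{lemmaForTheoremB} forces each $f_n\Delta$ to vanish a.e.\ on $\resLOG(E)$; since $\conj{b}\ne 0$ a.e., this yields $|\resLOG(E)|=0$. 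Your instinct to reduce to Theorem~B (via the finite logarithmic energy condition) is the right one, but it must be routed through $T_{\conj{b}}$ and the space $\hil(\conj{b})$ rather than through a nonexistent isometric identification with $\Po^2(\mu_b)$.
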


The papers \cite{DBRpapperAdem} and \cite{ConstrFamiliesSmoothCauchyTransforms} deal with positive approximation results. It is found, for instance, that if in addition to the condition $|\resLOG(E)| = 0$, the function $1-|b(\zeta)|$ is not too small on $E$, and the inner factor of $b$ satisfies certain properties, then the set $\A^\infty \cap \hb$ will be dense in $\hb$. In total thus, three conditions are presented in \cite{ConstrFamiliesSmoothCauchyTransforms} which imply this density statement. By consideration of examples in \cite{DBRpapperAdem}, these three conditions are presumed to be close to optimal. Theorem C confirms that one of these conditions, the vanishing of the residual set above, is in fact necessary. 

\subsection{A comment on the weighted circle case}

A more general weighted variant of our problem, in which the characteristic function $1_E$ in \eqref{muform1} is replaced by a non-negative measurable function $w$ living on some carrier set $E$, is more difficult. In this setting, it is both the structure of the carrier set $E$ and the local smallness of the weight $w$ that cause $L^2$-summands to appear in the Thomson decomposition of the measure. 

Significant efforts have been made to understand the conditions under which the space $\Po^2(\mu)$ contains $L^2(w \, d\m)$, which of course is the biggest possible non-analytic summand. The curious reader can consult \cite{kriete1990mean}, in which a smallness criterion on the weight $w$ is shown to imply this \textit{complete splitting} of the measure into an analytic piece on $\D$ and a full $L^2(w\, d\m)$-piece on $\T$. Interesting methods are employed in \cite{kriete1990mean} which are different from ours, and rely on estimates of certain composition operators. Our present work has a different focus, and it is the identification of \textit{any} $L^2$-summands inside of $\Po^2(\mu)$ that is our principal interest. In the weighted context, we deduce from our results that $\Po^2(\mu)$ contains an $L^2$-summand if the carrier of the weight $w$ has a non-trivial residual.

\begin{corD}
Let $\mu$ have the form $$d\mu = (1-|z|)^{\alpha} + w \,d \m,$$ where $\alpha > -1$ and $w$ is an integrable measurable non-negative weight on $\T$. Consider the measurable set $$E := \{ \zeta \in \T : w(\zeta) > 0 \}$$ and let $1_{\resLOG(E)} \cdot w$ be the restriction of the weight $w$ to the residual set of $E$. Then $L^2(1_{\resLOG(E)} \cdot w \,d\m)$ is contained in $\Po^2(\mu)$.
\end{corD}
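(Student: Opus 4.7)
The plan is to bootstrap Corollary D from Theorem A by passing through the characteristic-weight reference measure
\[
d\mu^* := (1-|z|)^\alpha dA + 1_E\, d\m.
\]
Since $\Po^2(\mu)$ is closed in $L^2(\mu)$ and the $\mu$-norm agrees with the $(1_{\resLOG(E)} w\, d\m)$-norm on functions supported in $\resLOG(E)$, it is enough to establish that the characteristic function $1_A$ lies in $\Po^2(\mu)$ for every measurable $A \subseteq \resLOG(E)$; finite linear combinations of such indicators form a dense subspace of $L^2(1_{\resLOG(E)} w\, d\m)$.

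Given such an $A$, Theorem A applied to $\mu^*$ places $1_A$ in the summand $L^2(\resLOG(E)) \subseteq \Po^2(\mu^*)$, so I would obtain analytic polynomials $p_n$ with
\[
\int_\D |p_n|^2 (1-|z|)^\alpha\, dA \to 0 \quad \text{and} \quad \int_E |p_n - 1_A|^2\, d\m \to 0.
\]
The first convergence already supplies the $\D$-side of what is needed for $L^2(\mu)$. What remains is to upgrade the $L^2(E, d\m)$-convergence of the boundary values into the weighted version $\int_E |p_n - 1_A|^2 w\, d\m \to 0$, and this is where the argument must go beyond what the bare statement of Theorem A provides, since $w$ is only integrable and may well be unbounded on $E$.

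To bridge this gap I would inspect the construction underlying Theorem A: the polynomials $p_n$ are built in the manner of Khrushchev from Taylor truncations of $H^\infty$-outer functions of modulus at most one associated to Beurling-Carleson sets exhausting $\coLOG(E)$, and one can read off that the sequence may be arranged to satisfy $\sup_n \|p_n\|_{L^\infty(\cD)} =: M < \infty$. Passing to an almost-everywhere convergent subsequence on $E$ and invoking dominated convergence in $L^1(w\, d\m)$ with the integrable dominant $(M+1)^2 w$ then yields $\int_E |p_n - 1_A|^2 w\, d\m \to 0$, showing that $1_A \in \Po^2(\mu)$ and completing the proof. The principal obstacle is thus verifying the uniform sup-norm bound on the approximating polynomials; this is a structural property of the construction rather than a consequence of the theorem's statement, and will require revisiting the polynomial-approximation scheme used in the proof of Theorem A rather than just citing it.
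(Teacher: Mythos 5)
Your strategy---deriving Corollary D from Theorem A by approximating $1_A$, $A \subseteq \resLOG(E)$, in $L^2(\mu^*)$ with $d\mu^* = (1-|z|)^\alpha dA + 1_E\,d\m$ and then upgrading to the weighted norm---correctly isolates the difficulty (an integrable $w$ may be unbounded on $E$), but the bridge you propose cannot be built. The claimed structural property $\sup_n \|p_n\|_{L^\infty(\cD)} = M < \infty$ is not merely unverified: it is impossible whenever $|A|>0$. Indeed, if $\|p_n\|_{H^\infty}\le M$ and $\int_\D |p_n|^2(1-|z|)^\alpha\,dA \to 0$, then by normal families a subsequence converges locally uniformly in $\D$ to $0$, so all Taylor coefficients tend to $0$; combined with the uniform bound this forces $p_n \to 0$ weak-$*$ in $L^\infty(\T)$, whence $\int_A p_n\,d\m \to 0$. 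But $p_n \to 1_A$ in $L^2(E)$ gives $\int_A p_n\,d\m \to |A|>0$, a contradiction. You have also misread the construction you appeal to: the outer-type functions $g_k=\exp(H_{f_{n_k}}/k)$ behind the proof satisfy $|g_k|\le 1$ only almost everywhere on $E$ (where $f_{n_k}\le 0$); on the complementary arcs $\beta_n$ their modulus is $\exp(c_n/k)$ with the $c_n$ unbounded, and inside $\D$ they grow like a power of $(1-|z|)^{-1}$, so no uniform sup-norm bound over $\cD$ passes to the approximating polynomials.

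The paper sidesteps this entirely by reversing the logical order: the general weighted statement \thref{p2musplittheorem} is proved first, and both Theorem A and Corollary D are immediate consequences of it. The weight causes no trouble there precisely because the test functions $g_k$ are bounded by $1$ almost everywhere on the carrier $E$ of $w$, so $\int_\T |g_k|^2 w\,d\m \le \int_\T w\,d\m < \infty$ uniformly in $k$, and the Beurling--Wiener step is carried out directly in $L^2(w\,d\m)$ rather than in $L^2(1_E\,d\m)$. If you insist on a two-step argument, what you would need is a uniform bound for the approximants of $1_A$ almost everywhere on $E$ only (not on $\cD$); that is not a consequence of the statement of Theorem A, and obtaining it amounts to re-running the proof of \thref{p2musplittheorem} with the weight already in place---which is exactly what the paper does.
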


The corollary is a direct consequence of \thref{p2musplittheorem} below, from which Theorem A will also be deduced.

\subsection{Outline of the paper}

In Section \ref{hausdorffsec} we recall some properties of the Hausdorff measures (or more precisely, Hausdorff contents), and alter their definitions a bit to produce Hausdorff-type measures which are more suitable to our purposes. The next part, Section \ref{CRsection}, deals with the definition and basic properties of the core-residual decompositions. Here, the main result is \thref{mainCRestimate}, which connects the decomposition to the Hausdorff-type measures defined earlier. The technical constructions of main objects occur in Sections \ref{frostmansection} and \ref{realvariablessection}, where we specialize some ideas from \cite{khrushchev1978problem}. The proof of the main result, Theorem A, occurs in Sections \ref{splitsection} and \ref{biggestsummandsection}. The last two Sections \ref{proofTBsec} and \ref{proofTCsec} deal with proofs of Theorems B and C.

\subsection{Conventions and notation}

We identify the circle $\T$ and the interval $[0,2\pi)$ by the usual mapping. An arc $A$ of the circle is sometimes called for an interval, and the notation $A = (a,b)$ means that $A$ is the shorter of the two circular arcs with endpoints $a,b \in \T$. For subsets $A$ of $\T$, we use the notation $|A| := m(A)$ and a normalization so that $m(\T) = 1$. The symbol $L^2(A)$ denotes the usual Lebesgue space of measurable functions living only on $A$ and square-integrable with respect to the restriction of the Lebesgue linear measure to $A$. That is, $L^2(A) = L^2(1_A \cdot m)$. For a measure $\mu$, the notations $\mu_\D$ and $\mu_\T$ denote the restrictions of the measure to $\D$ and $\T$, respectively. We shall sometimes take complements of a closed set $A$ with respect to an interval $I$. This complement is $I \setminus A$. Strictly speaking this might not be open (considered as a subset of $\T$) if $I$ is closed, since two of the complementary arcs might be only half-open. In this case, being \textit{open} means \textit{relatively open} with respect to $I$. 

\section{Hausdorff-type measures}

\label{hausdorffsec}

We begin our discussion by reviewing the notion of Hausdorff measures (or Hausdorff contents) and their basic properties. We will introduce also some useful variations of the classical definition. 

\subsection{Measure functions}

Our Hausdorff-type measures will be defined using a measuring function $h$. The functions which we consider in our discussions satisfy the following basic assumptions:

\begin{enumerate}[(i)]
    \item $h$ is defined for real $t \in [0, \delta)$, where $\delta$ is some positive number, and it is increasing and continuous, with $h(0) = 0$,
    \item $h(t)/t$ is decreasing, with $\lim_{t \to 0^+} h(t)/t = +\infty$.
\end{enumerate}

Any function satisfying the above requirements will be called a \textit{measure function}. We shall not make a fuss about the size of $\delta > 0$ in $(i)$ above, so we assume that it is big enough so that all our computations are valid. Any function $h$ satisfying $(i)$ and $(ii)$ can be modified or extended to a function on $(0, \infty)$ with the same properties, in such a way that the its values near $0$ remain unchanged. Such a modified function induces equivalent Hausdorff-type measures.

An immediate consequence of $(ii)$ above is subadditivity:
\begin{gather*}
h(t+s) = (t+s)\frac{h(t+s)}{t+s} = t\frac{h(t+s)}{t+s} + s\frac{h(t+s)}{t+s} \\ \leq t\frac{h(t)}{t} + s\frac{h(s)}{s} = h(s)+h(t).
\end{gather*}

We will develop our most comprehensive results for a general measure function $h$, but the main example  we have in mind is $$h(t) = t \log(e/t), \quad t \in (0,1],$$  which we extend by continuity to satisfy $h(0) = 0$. The constant $e$ appearing in the expression ensures that the function is increasing on $[0,1]$.

\subsection{Hausdorff measures}

For a measure function $h$ satisfying the above properties, we recall the definition of the Hausdorff measure $\Mh$ of a subset $E$ of the unit circle, given by the expression

\begin{equation}\label{Mhdef}
\Mh(E) := \inf_L \Big\lbrace \sum_{\ell \in L} h(|\ell|) : E \subset \cup_{\ell \in L}, \, \ell \text{ open interval} \Big\rbrace.
\end{equation} That is, $\Mh(E)$ is the infimum of the numbers $\sum_{\ell \in L} h(|\ell|)$, taken over covers $L$ of $E$. By continuity of $h$, in fact it matters not if we take the intervals $\ell$ to be open, closed, or half-open, and sometimes it is convenient to assume that the intervals are one, or the other.

We define also a measure where the covering families $L$ above are restricted to consist of dyadic intervals. Let \begin{equation} \label{dyadicintervals}
d_{n,j} = \{ e^{it} : t \in [2\pi2^{-n}j, 2\pi2^{-n}(j+1)] \}, \quad n \in \mathbb{N}, \, 0 \leq j < 2^n - 1,
\end{equation}
\begin{equation}
\label{Dnintervals}
    \mathcal{D}_n := \{d_{n,j}\}_{j=1}^{2^{n-1}}
\end{equation} and 
\begin{equation*}
    \mathcal{D} = \cup_{n \in \mathbb{N}} \mathcal{D}_n.
\end{equation*}  
The set $\mathcal{D}_n$ consists of the \textit{dyadic intervals of $n$th generation}, and $\mathcal{D}$ is the family of dyadic intervals covering $\T$. We define
\begin{equation}\label{Mhddef}
\Mhd(E) := \inf_L \Big\lbrace \sum_{d \in L} h(|d|) : E \subset \cup_{d \in L}, \, d \in \mathcal{D}\Big\rbrace.
\end{equation}
The two quantities $\Mh(E)$ and $\Mhd(E)$ are comparable. 
\begin{prop} \thlabel{mhequiv}
For any set $E \subset \T$ we have $$\Mh(E) \leq \Mhd(E) \leq 2\Mh(E).$$
\end{prop}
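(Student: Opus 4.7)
The two inequalities are handled separately, and the first, $\Mh(E) \leq \Mhd(E)$, is immediate: any dyadic cover is a particular interval cover, and by the continuity of $h$ it is irrelevant whether the dyadic intervals are regarded as open, closed or half-open when computing the infimum.

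The substance of the proposition lies in the reverse estimate $\Mhd(E) \leq 2\Mh(E)$. The plan is to fix $\epsilon > 0$, choose an interval cover $\{\ell_i\}$ of $E$ with $\sum_i h(|\ell_i|) < \Mh(E) + \epsilon$, and replace each $\ell_i$ by a dyadic cover of $\ell_i$ whose total $h$-content is at most $2h(|\ell_i|)$. Summing over $i$ and letting $\epsilon \to 0$ then produces the inequality. The whole argument therefore reduces to a one-interval claim: for every interval $\ell \subset \T$ of length $t$ there is a dyadic cover of $\ell$ of total $h$-content at most $2 h(t)$.

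To cover a given interval $\ell$ of length $t$, I would select the unique integer $n$ satisfying $2\pi 2^{-n-1} < t \leq 2\pi 2^{-n}$, so that the length $2\pi 2^{-n}$ of a generation-$n$ dyadic interval falls in the range $[t, 2t)$. Because the generation-$n$ dyadic endpoints are then spaced at least $t$ apart, $\ell$ can meet at most two consecutive dyadics of that generation. In the case where $\ell$ fits inside one such dyadic $d$, I simply use $\{d\}$ as the cover, and the bound $h(|d|) \leq (|d|/t) h(t) \leq 2 h(t)$ follows from the decreasing property of $h(s)/s$. In the case where $\ell$ straddles a single generation-$n$ endpoint $c$ and so decomposes as $\ell_1 \cup \{c\} \cup \ell_2$ with $\ell_i$ lying in the corresponding adjacent dyadic $d_i$, I would refine each $d_i$ to the smallest dyadic sub-interval of $d_i$ still covering $\ell_i$; by descending through the nested halvings of $d_i$ one obtains a sub-interval of length in $[|\ell_i|, 2|\ell_i|)$, and the combined $h$-content is controlled by the doubling estimate $h(2s) \leq 2 h(s)$ together with the subadditivity $h(s+u) \leq h(s)+h(u)$ noted in the preceding subsection.

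The main obstacle is precisely this straddling subcase: naively covering $\ell$ by the two full generation-$n$ dyadics only yields a factor $4$, and sharpening this to the stated factor $2$ is what forces the refinement step and the careful balancing of the two sub-covers against $h(|\ell_1|) + h(|\ell_2|)$, using both subadditivity and the doubling property of the measure function $h$.
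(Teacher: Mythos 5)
Your overall architecture is the same as the paper's: the inequality $\Mh(E) \leq \Mhd(E)$ is immediate from the inclusion of cover families, and the reverse estimate is reduced to replacing each interval $\ell$ of a near-optimal cover by a dyadic cover of $\ell$ of controlled $h$-content. You are also right to isolate the straddling configuration as the only delicate point, and your geometric setup there is careful (choosing the generation so that $\ell$ meets at most two dyadics, and shrinking each piece to the minimal dyadic ancestor of length $L_i \in [|\ell_i|, 2|\ell_i|)$).

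However, the final estimate in the straddling case has a genuine gap. You claim that $h(L_1)+h(L_2) \leq 2h(t)$ follows from the doubling bound $h(2s) \leq 2h(s)$ ``together with subadditivity.'' Doubling gives $h(L_i) \leq h(2|\ell_i|) \leq 2h(|\ell_i|)$, so your cover has content at most $2\bigl(h(|\ell_1|)+h(|\ell_2|)\bigr)$; to conclude you would then need $h(|\ell_1|)+h(|\ell_2|) \leq h(|\ell_1|+|\ell_2|) = h(t)$, which is \emph{super}additivity --- the exact reverse of the subadditivity established in Section \ref{hausdorffsec}, and false for measure functions (for which $h(t)/t$ is strictly decreasing near $0$). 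The only bound your construction actually delivers is $h(L_1)+h(L_2) \leq 2h(|\ell_1|)+2h(|\ell_2|) \leq 4h(t)$, i.e.\ the factor $4$ you were trying to beat. The factor $2$ would follow from your cover if $h$ were assumed concave (Jensen gives $h(L_1)+h(L_2) \leq 2h\bigl(\tfrac{L_1+L_2}{2}\bigr) \leq 2h(t)$ since $L_1+L_2 < 2t$), but concavity is not among the postulated properties of a measure function, so as written the step fails. For comparison, the paper gets its constant by covering $\ell$ with two dyadic intervals of \emph{equal} length $|d_1|=|d_2|$ with $|d_1| < |\ell| \leq 2|d_1|$ and using only monotonicity, $2h(|d_1|) \leq 2h(|\ell|)$; note in any case that nothing downstream depends on the precise constant, only on the comparability of $\Mh$ and $\Mhd$, so a factor $3$ or $4$ would serve the paper equally well --- but your write-up asserts $2$ and does not prove it.
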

\begin{proof}
The family of admissible covers $L$ is more restrictive in the definition of $\Mhd$, thus $\Mh(E) \leq \Mhd(E).$ On the other hand, if $L$ is any cover of $E$ consisting of any kind of intervals, then each $\ell \in L$ can be covered by two dyadic intervals $d_1, d_2$ of equal length $|d_1| = |d_2|$ which satisfy $|d_1| < |\ell| \leq 2|d_1|$. This implies, by the postulated properties of $h$, that $$h(|\ell|) \leq h(|d_1|) + h(|d_2|) \leq 2 h(|\ell|).$$ Summing over all $\ell \in L$ and passing to the infimum over all covers $L$, we obtain $\Mhd(E) \leq 2\Mh(E).$
\end{proof}

The measure function $\Mhd$ has the following crucial continuity property. For a proof, we refer the reader to \cite[page 301]{havinbook} or \cite[page 9]{carleson1967selected}

\begin{prop} \thlabel{Mhdcont}
If $A_n$ is an increasing sequence of subsets of $\T$ for which $A = \cup_n A_n$ holds, then $$\Mhd(A) = \lim_{n \to \infty} \Mhd(A_n).$$
\end{prop}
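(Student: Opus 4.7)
The inequality $\lim_n \Mhd(A_n) \le \Mhd(A)$ is immediate from $A_n \subseteq A$ and the monotonicity of $\Mhd$. The substance of the statement is the reverse inequality $\Mhd(A) \le L$, where $L := \lim_n \Mhd(A_n)$, and we may assume $L < \infty$.

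My plan is to route the proof through the Frostman-type dual description
\begin{equation*}
\Mhd(E) = \sup\bigl\{\nu(E) : \nu \ge 0 \text{ Borel on } \T,\ \nu(d) \le h(|d|) \text{ for every } d \in \mathcal{D}\bigr\},
\end{equation*}
valid for any $E \subseteq \T$. The inequality $\ge$ is immediate from $\nu(E) \le \sum_{d \in L} \nu(d) \le \sum_{d \in L} h(|d|)$ for any admissible $\nu$ and any dyadic cover $L$ of $E$, followed by taking the infimum over $L$. The substantive inequality $\le$ is a min-cut/max-flow type argument on the dyadic tree: truncate the problem at generation $N$ so that it becomes a finite linear program, apply LP duality to extract an admissible measure $\nu_N$ with $\nu_N(E)$ within $\epsilon$ of the truncated content, and then take a weak-$*$ cluster point as $N \to \infty$. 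Granted the duality, the proposition follows in two lines: for each admissible $\nu$, ordinary continuity from below of the single measure $\nu$ gives $\nu(A) = \lim_n \nu(A_n) \le \lim_n \Mhd(A_n) = L$, and taking the supremum over admissible $\nu$ yields $\Mhd(A) \le L$.

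The main obstacle is the duality itself, where one must check that the weak-$*$ cluster point of the truncated optimizers remains admissible (i.e., still satisfies $\nu(d) \le h(|d|)$ for every $d \in \mathcal{D}$) and nearly attains the full content $\Mhd(E)$. A tempting direct alternative---extract from disjoint near-optimal covers $L_n$ of $A_n$ (with weight at most $\Mhd(A_n) + \epsilon/2^n$) a pointwise limit on the countable index set $\mathcal{D}$ by a diagonal argument, and bound the weight of the limiting antichain by Fatou's lemma---runs aground on the question of whether this limit actually covers $A$: for $x \in A$, the nested dyadic intervals in $L_n$ containing $x$ can shrink to $\{x\}$ along the subsequence, so no fixed dyadic ancestor of $x$ need survive in the limit. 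Ruling out this vanishing phenomenon requires a preparatory truncation of each $L_n$ to bounded generations, which in effect amounts to the same work as the duality argument above; this is why I would prefer the Frostman route.
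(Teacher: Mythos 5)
The paper does not prove this proposition at all: it cites Havin--J\"oricke and Carleson, where the argument is a direct combinatorial one (one takes near-optimal dyadic antichain covers $L_n$ of $A_n$ and regularizes them by a stopping-time modification, replacing clusters of descendants by a common dyadic ancestor whenever that is cheaper, so that the covers can be merged into a single cover of $A$ of controlled weight). Your reduction of the proposition to the dual description is clean, and the monotone-limit step $\nu^*(A)=\lim_n\nu^*(A_n)\le\lim_n\Mhd(A_n)$ is correct for any Borel measure. But the entire weight of the proof rests on the identity $\Mhd(E)=\sup_\nu\nu(E)$ for \emph{arbitrary} $E\subseteq\T$, which you assert, acknowledge as ``the main obstacle,'' and only sketch; as written, that sketch does not close, so the proposal is not a proof.

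Concretely, two steps fail for non-closed $E$. First, truncation at generation $N$ computes the wrong quantity: for a countable dense $E$ one has $\Mhd(E)=0$, yet any cover by dyadic intervals of generation at most $N$ consists of finitely many closed intervals whose union must be all of $\T$, so its weight is at least $h(1)>0$ (using that $h(t)/t$ decreases); the finite LPs therefore do not converge to $\Mhd(E)$. Second, even granting good truncated optimizers $\nu_N$, the functional $\nu\mapsto\nu(E)$ is neither upper nor lower semicontinuous under weak-$\ast$ convergence for a general set $E$ (take $\nu_N=\delta_{x_N}$ with $x_N\in E$ and $x_N\to x\notin E$), so the cluster point may assign $E$ no mass at all. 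This is precisely why the Frostman-type duality for non-compact sets is ordinarily \emph{derived from} the increasing-sets property, via Choquet capacitability reducing to compact subsets where max-flow/min-cut applies; routing the proof of the increasing-sets property through that duality therefore risks circularity. Your diagnosis of why the naive diagonal limit of covers fails (intervals shrinking to points) is accurate, but the classical remedy is not ``the same work as the duality'' --- it is the cover-regularization described above, and that is the argument that would need to be supplied. (For the only instance actually used in the paper, $A$ open and $A_n$ finite unions of closed intervals, your route can be repaired, since for open $A$ the portmanteau inequality $\nu(A)\le\liminf_N\nu_N(A)$ rescues the weak-$\ast$ limit; but that does not prove the proposition as stated.)
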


\subsection{Small Hausdorff measures}

We will also find it convenient to work with the following versions of $\Mh$ and $\Mhd$ which disregard sets of Lebesgue measure zero. We define the quantity $\Mho(E)$ analogously to \eqref{Mhdef}, but the family $\{\ell\}_{\ell \in L}$ is only required to cover $E$ up to a subset of Lebesgue measure zero. Hence 

\begin{equation} \label{mhzerodef}
\Mho(E) := \inf_L \Big\lbrace \sum_{\ell \in L} h(|\ell|) : |E \setminus (\cup_{\ell \in L} \ell)| = 0, \, \ell \text{ open interval} \Big\rbrace.
\end{equation}
and 
\begin{equation} \label{mhdzerodef}
\Mhdo(E) := \inf_L \Big\lbrace \sum_{d \in L} h(|\ell|) : |E \setminus (\cup_{d \in L} d)| = 0, \, d \in \mathcal{D} \Big\rbrace.
\end{equation}

A version of \thref{mhequiv} obviously also holds for the measures $\Mho$ and $\Mhdo$. Other simple relations between our Hausdorff-type measures are the following.

\begin{prop} \thlabel{mhinequalities}
For any measurable subset $E$ of $\T$ it holds that 

$$\Mh(E) \geq \Mho(E) \geq h(|E|)$$ and
$$\Mhd(E) \geq \Mhdo(E) \geq h(|E|).$$ 
\end{prop}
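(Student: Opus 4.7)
The proposition compares four Hausdorff-type quantities on a single set, so the plan is to separate out the two kinds of inequalities (dropping cover-completeness, and comparing a single interval of length $|E|$ with an arbitrary cover) and verify each in turn; both dyadic and non-dyadic cases follow from the same reasoning.

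First I would prove the two easy inequalities $\Mh(E) \geq \Mho(E)$ and $\Mhd(E) \geq \Mhdo(E)$. These are purely definitional: if a family $L$ of intervals (arbitrary, respectively dyadic) covers $E$ in the sense that $E \subset \cup_{\ell \in L} \ell$, then it automatically covers $E$ up to a set of Lebesgue measure zero, so the set of admissible covers appearing in \eqref{mhzerodef} (respectively \eqref{mhdzerodef}) contains the set of admissible covers appearing in \eqref{Mhdef} (respectively \eqref{Mhddef}). Taking infima reverses the inclusion and gives the two inequalities.

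Next I would prove the lower bounds $\Mho(E) \geq h(|E|)$ and $\Mhdo(E) \geq h(|E|)$. The strategy is to fix an arbitrary admissible cover $L = \{\ell_i\}_i$ for $E$ up to a null set, and show that $\sum_i h(|\ell_i|) \geq h(|E|)$; taking the infimum over $L$ then yields the claim. Since $E$ is covered by $\cup_i \ell_i$ up to a null set, countable subadditivity of Lebesgue measure gives $|E| \leq \sum_i |\ell_i|$. The finite subadditivity of $h$ displayed in the excerpt yields, by an immediate induction, that $h\bigl(\sum_{i=1}^N |\ell_i|\bigr) \leq \sum_{i=1}^N h(|\ell_i|)$ for every $N$. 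For any $\varepsilon > 0$, I choose $N$ large enough that $\sum_{i=1}^N |\ell_i| \geq |E| - \varepsilon$ (possible because the total sum is at least $|E|$), and conclude from monotonicity of $h$ that $h(|E|-\varepsilon) \leq \sum_{i=1}^N h(|\ell_i|) \leq \sum_i h(|\ell_i|)$. Letting $\varepsilon \to 0^+$ and using the continuity of $h$ at $|E|$ furnishes the inequality $h(|E|) \leq \sum_i h(|\ell_i|)$. Exactly the same argument applies verbatim to the dyadic-cover infimum, since nothing in the reasoning used the shape of the intervals.

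The only step where one could stumble is the passage from finite to countable subadditivity of $h$; the intermediate trick of truncating the sum and invoking continuity of $h$ is what makes this clean and avoids any extraneous integrability argument. Once that is handled, chaining the four inequalities gives both display lines of the proposition simultaneously.
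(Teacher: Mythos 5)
Your proposal is correct and follows essentially the same route as the paper: the first inequalities come from the inclusion of admissible covers, and the second from $|E| \leq \sum_{\ell \in L} |\ell|$ combined with subadditivity and monotonicity of $h$. The paper simply invokes ``subadditivity of $h$'' for the countable sum, whereas you carefully justify the passage from finite to countable subadditivity via truncation and continuity of $h$ --- a detail worth spelling out, but not a different argument.
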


\begin{proof}
The "small" versions $\Mho$ and $\Mhdo$ allow for more general covers $L$, so the first inequalities in both lines are clear. Moreover, if $|E \setminus (\cup_{\ell \in L} \ell)| = 0$ then certainly $|E| \leq \sum_{\ell \in L} |\ell|$, and by subadditivity of $h$ we have $$h(|E|) \leq \sum_{\ell \in L} h(|\ell|).$$ By taking infimum over admissable families $L$ we obtain the right-most inequalities above.
\end{proof}

\section{Cores, residuals, and their properties}

\label{CRsection}

In this section we introduce our principal tool, which we call the \textit{core-residual decomposition}. We fix a measure function $h$ satisfying the properties stated in Section \ref{hausdorffsec}. 

\subsection{$h$-Carleson sets}

The following definition is classical.

\begin{definition}
A closed subset $A$ of $\T$ is called an \textit{$h$-Carleson} set if the system $L = \{\ell_n\}_n$ of disjoint open arcs which constitute the complement $\T \setminus A$ satisfies \begin{equation}
    \label{hcarlesondef} 
    \sum_n h(|\ell_n|) < \infty.
\end{equation}
\end{definition}
For completeness, we spend a moment to recall the basic properties of the $h$-Carleson sets which are needed in the following discussion. A simple first observation which one can make is that if we start with a family of not necessarily disjoint open arcs which satisfy \eqref{hcarlesondef}, then the complement of their union is an $h$-Carleson set.

\begin{prop}\thlabel{constrhCarleson}
Let $L = \{\ell\}_{\ell \in L}$ be a family of open intervals of $\T$ for which we have $$\sum_{\ell \in L} h(|\ell|) < \infty.$$ Then $A = \T \setminus (\cup_{\ell \in L} \ell)$ is an $h$-Carleson set.
\end{prop}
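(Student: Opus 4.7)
The plan is to show directly that $A$, which is closed as the complement of an open set, satisfies the summability condition \eqref{hcarlesondef}. First I would decompose the open set $\T \setminus A = \bigcup_{\ell \in L} \ell$ into its at most countably many connected components $\{I_n\}_n$, which are precisely the disjoint open arcs complementary to $A$. A key preliminary observation is that each $\ell \in L$, being connected, is entirely contained in exactly one $I_n$; setting $L_n := \{\ell \in L : \ell \subset I_n\}$ therefore partitions the family $L$, and one sees immediately that $I_n = \bigcup_{\ell \in L_n} \ell$.

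The heart of the argument is then a pointwise inequality $h(|I_n|) \leq \sum_{\ell \in L_n} h(|\ell|)$, which I would derive from the subadditivity of $h$ established just after condition $(ii)$ in Section \ref{hausdorffsec}. The family $L_n$ need not be countable, but since $\T$ is second-countable, the Lindel\"of property yields a countable subfamily of $L_n$ still covering $I_n$, so that $|I_n| \leq \sum_{\ell \in L_n}|\ell|$ by countable subadditivity of Lebesgue measure. Monotonicity of $h$ together with countable subadditivity (obtained from finite subadditivity by a standard passage to the limit, using continuity of $h$) then gives
\[
h(|I_n|) \;\leq\; h\Big(\sum_{\ell \in L_n} |\ell|\Big) \;\leq\; \sum_{\ell \in L_n} h(|\ell|).
\]

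Summing this estimate over $n$ and exploiting the fact that the families $L_n$ are pairwise disjoint inside $L$ yields
\[
\sum_n h(|I_n|) \;\leq\; \sum_n \sum_{\ell \in L_n} h(|\ell|) \;\leq\; \sum_{\ell \in L} h(|\ell|) \;<\; \infty,
\]
which is exactly the $h$-Carleson condition for $A$. I do not anticipate a real obstacle here: the only delicate points are the possible uncountability of $L$, handled by the Lindel\"of reduction, and the extension of the finite subadditivity of $h$ to countable sums, both of which are routine. Everything else is an unpacking of the definition of $h$-Carleson sets together with the properties of $h$ already recorded.
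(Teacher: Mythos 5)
Your proposal is correct and follows essentially the same route as the paper: decompose $\T \setminus A$ into its connected components, observe that each $\ell \in L$ lies in exactly one component, and apply (countable) subadditivity of $h$ before summing over components. The only difference is that you spell out the Lindel\"of reduction and the passage from finite to countable subadditivity, details the paper leaves implicit.
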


\begin{proof}
The set $A$ is obviously closed. Its complement is a union $$\T \setminus A = \cup_n \alpha_n$$ of disjoint open arcs $\alpha_n$, and each $\alpha_n$ is the union of those arcs $\ell \in L$ which intersect it non-trivially. Thus $|\alpha_n| \leq \sum_{|\ell \cap \alpha_n| > 0} |\ell|,$ which by subadditivity of $h$ implies $$h(|\alpha_n|) \leq \sum_{|\ell \cap \alpha_n| > 0} |h(|\ell|).$$
Whenever $\ell$ intersects $\alpha_n$ non-trivially, then $\ell$ is contained in $\alpha_n$. Thus each $\ell \in L$ appears in the sum above only for one distinct interval $\alpha_n$. Summing the above inequality over all arcs $\alpha_n$ proves the claim. 
\end{proof}

The class of $h$-Carleson sets is closed under taking finite unions and intersections.

\begin{prop} \thlabel{carlesonunionprop}
Let $A$ and $B$ be $h$-Carleson sets. Then their union $A \cup B$ and their intersection $A \cap B$ are also $h$-Carleson sets.
\end{prop}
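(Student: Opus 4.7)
My plan is to handle the union and intersection separately, since the intersection is essentially a one-line consequence of \thref{constrhCarleson} while the union requires a small combinatorial argument. Write $\T \setminus A = \cup_n \alpha_n$ and $\T \setminus B = \cup_m \beta_m$ for the complementary arcs. For the intersection I would take $L := \{\alpha_n\}_n \cup \{\beta_m\}_m$ and note that
\[
\sum_{\ell \in L} h(|\ell|) = \sum_n h(|\alpha_n|) + \sum_m h(|\beta_m|) < \infty,
\]
while $\T \setminus (A \cap B) = (\T \setminus A) \cup (\T \setminus B) = \bigcup_{\ell \in L} \ell$. Then \thref{constrhCarleson} immediately yields that $A \cap B$ is $h$-Carleson.

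For the union I would work directly with the family $\{\gamma_k\}$ of connected components of $\T \setminus (A \cup B)$, whose endpoints lie in the closed set $A \cup B$. I would partition the $\gamma_k$ into three classes: \emph{Type I}, those $\gamma_k$ that coincide with some $\alpha_n$; \emph{Type II}, the remaining $\gamma_k$ that coincide with some $\beta_m$; and \emph{Type III}, the rest. Since each Type III $\gamma_k$ is a connected subset of $\T \setminus A$, it is contained in a unique $\alpha_n$, and being neither equal to any $\alpha_n$ nor to any $\beta_m$ it is a \emph{proper} subarc of that $\alpha_n$. The Type I and Type II contributions to $\sum_k h(|\gamma_k|)$ are immediately bounded by $\sum_n h(|\alpha_n|)$ and $\sum_m h(|\beta_m|)$ respectively, since each $\alpha_n$ (resp.\ $\beta_m$) can play the role of at most one $\gamma_k$.

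The combinatorial core of the proof is showing that each $\alpha_n$ contains at most two Type III arcs. If $\gamma_k \subset \alpha_n$ is Type III, then at least one endpoint of $\gamma_k$ lies in the interior of $\alpha_n$, which is disjoint from $A$, and hence must belong to $B$. The crucial substep is to rule out the case that \emph{both} endpoints lie in $B$: if they did, then $\gamma_k \subset \T \setminus B$ is contained in a unique $\beta_m$, and comparing the maximality of $\gamma_k$ as a component of $\T \setminus (A \cup B)$ with the maximality of $\beta_m$ as a component of $\T \setminus B$ forces $\gamma_k = \beta_m$, contradicting the Type III hypothesis. So exactly one endpoint of $\gamma_k$ must be an endpoint of $\alpha_n$; since $\alpha_n$ has only two endpoints, at most two Type III arcs fit inside. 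Using monotonicity of $h$ and $|\gamma_k| \le |\alpha_n|$, the Type III contribution is then bounded by $2 \sum_n h(|\alpha_n|) < \infty$, and summing the three pieces completes the proof.

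The only step requiring genuine care is the maximality comparison used to exclude Type III arcs with both endpoints in $B$; everything else is direct bookkeeping with subadditivity and monotonicity of $h$.
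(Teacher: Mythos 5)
Your proof is correct and follows essentially the same route as the paper: the intersection is dispatched by feeding the combined family of complementary arcs into \thref{constrhCarleson}, and the union is handled by the same three-way classification of the components of $\T \setminus (A \cup B)$ (yours by component type, the paper's by whether $\alpha_n \subseteq \beta_m$, $\beta_m \subseteq \alpha_n$, or neither), with the identical key count that each $\alpha_n$ can host at most two ``straddling'' arcs because such an arc must have exactly one endpoint at an endpoint of $\alpha_n$. No substantive differences.
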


\begin{proof}
Let $\T \setminus A = \cup_{n} \alpha_n$, where $\alpha_n$ are pairwise disjoint open arcs, and $\T \setminus B = \cup_m \beta_m$ be the corresponding decomposition of $\T \setminus B$. The complement $T \setminus ( A \cap B)$ is a disjoint union of open arcs which are themselves unions of arcs $\alpha_n$ and $\beta_n$. So the fact that $A \cap B$ is $h$-Carleson follows from \thref{constrhCarleson}.

For the case of the union, we have $$\T \setminus (A \cup B) = \cup_{n,m} \alpha_n \cap \beta_m,$$ and in order to verify that $A \cup B$ is an $h$-Carleson set, we need to show that $$\sum_{n,m} h(|\alpha_n \cap \beta_n|) < \infty.$$  Let us forget about the indices $(n,m)$ such that $\alpha_n \cap \beta_m$ is void, they contribute nothing to the sum above. We partition the rest of the indices $(n,m)$ into three families. The family $F_1$ consists of those $(n,m)$ such that $\alpha_n \subset \beta_m$, the family $F_2$ consists of those $(n,m)$ such that $\beta_m \subset \alpha_n$, and family $F_3$ of those $(n,m)$ such that $\alpha_n \cap \beta_m$ is non-empty, but neither $\alpha_n$ or $\beta_m$ is contained in the other. Our task is then to show that $$\sum_{(n,m) \in F_i} h(|\alpha_n\cap \beta_m|) < \infty$$ for $i = 1,2,3$. 

We start with $F_1$. If $(n,m) \in F_1$, then there is no other index $m'$ such that $(n,m') \in F_1$, for that would imply $\beta_m \cap \beta_{m'}$ being non-trivial. It follows that each integer $n$ appears at most once as a first coordinate of a pair in $F_1$. Moreover, clearly $|\alpha_n \cap \beta_m| = |\alpha_n|$, and so $$\sum_{(n,m) \in F_1} h(|\alpha_n \cap \beta_m|) = \sum_{(n,m) \in F_1} h(|\alpha_n|)\leq \sum_{n} h(|\alpha_n|) < \infty,$$ by the assumption that $A$ is $h$-Carleson. The family $F_2$ is handled symmetrically.

When it comes to family $F_3$, note that any integer $n$ appears as a first coordinate in at most two pairs in $F_3$. Indeed, if $(n,m) \in F_3$, then it follows that either the left or the right endpoint of $\alpha_n$ is contained in $\beta_m$. Since $\alpha_n$ obviously has only two endpoints, there can be at most two pairs in $F_3$ with first coordinate $n$. Thus $$\sum_{(n,m) \in F_3} h(|\alpha_n \cap \beta_m|) \leq \sum_{(n,m) \in F_3} h(|\alpha_n|) \leq 2 \sum_n h(|\alpha_n|) < \infty.$$ By summing over all the three families we see that $A \cup B$ is an $h$-Carleson set. \end{proof}

\subsection{Core-residual decomposition}

With the definition and basic properties of $h$-Carleson sets in place, we can introduce our decomposition of measurable sets. The decomposition is very simple, but it has a crucial relation to the small Hausdorff-type measures. The relation is stated in \thref{mainCRestimate}.

Here and throughout the paper, if $|A \setminus B| = 0$ holds between two measurable subsets $A$ and $B$, we say that $A$ is \textit{almost contained} in $B$. 

\begin{prop} \thlabel{Edecomp}
Let $E$ be any measurable subset of $\T$ and $h$ a measure function. There exists a decomposition $$E = (F \cup \Tilde{E}) \setminus N,$$ where $N$ is a set of Lebesgue measure zero, and the disjoint measurable subsets $F$ and $\Tilde{E}$ satisfy the following two properties:

\begin{enumerate}[(i)]
    \item if $A$ is an $h$-Carleson set which is almost contained in $E$, then it is almost contained in $\Tilde{E}$,
    \item $\Tilde{E}$ is, up to a set of Lebesgue measure zero, a countable union of $h$-Carleson sets almost contained in $E$.
\end{enumerate} The decomposition is unique, up to a difference of sets of Lebesgue measure zero.
\end{prop}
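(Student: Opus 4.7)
The plan is to take $\tilde E$ to be a maximal-measure countable union of $h$-Carleson subsets of $E$, and let $F$ be what is left in $E$. Explicitly, I set
\[
C := \sup\{ |G| : G \text{ is } h\text{-Carleson and } |G \setminus E|=0\},
\]
choose a sequence $\{G_n\}$ of $h$-Carleson sets almost contained in $E$ with $|G_n|\to C$, and define $\tilde E := \bigcup_n G_n$, $F := E \setminus \tilde E$, and $N := \tilde E \setminus E$. Since each $G_n$ is almost contained in $E$, countable subadditivity gives $|N|=0$, and we obtain a decomposition of the required shape $E = (F \cup \tilde E)\setminus N$. Property (ii) is immediate from the definition of $\tilde E$.

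The substance is property (i). I first replace $\{G_n\}$ by its cumulative unions $H_n := G_1 \cup \cdots \cup G_n$. By \thref{carlesonunionprop} each $H_n$ is $h$-Carleson; it is clearly almost contained in $E$ and contains $G_n$, so $|H_n|\to C$, and continuity of Lebesgue measure along the increasing sequence yields $|\tilde E|=C$. Now let $A$ be any $h$-Carleson set with $|A \setminus E|=0$. The finite union $A \cup H_n$ is, again by \thref{carlesonunionprop}, an $h$-Carleson set almost contained in $E$, so by the definition of $C$ its Lebesgue measure is at most $C$. Passing to the limit,
\[
|A \cup \tilde E| = \lim_{n\to\infty}|A \cup H_n| \leq C = |\tilde E|,
\]
which forces $|A\setminus \tilde E|=0$ and proves (i).

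For uniqueness, suppose $(F', \tilde E')$ is another pair satisfying (i) and (ii). Property (ii) for $\tilde E'$ exhibits it, up to a Lebesgue null set, as a countable union of $h$-Carleson sets almost contained in $E$; property (i) applied to the pair $(F,\tilde E)$ forces each such set, hence $\tilde E'$ itself, to be almost contained in $\tilde E$. The reverse containment follows by symmetry, and $F,F'$ are then determined up to null sets as the complements of $\tilde E,\tilde E'$ in $E$.

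The only genuine obstacle is that a countable union of $h$-Carleson sets need not itself be $h$-Carleson, so one cannot simply apply \thref{carlesonunionprop} to $A \cup \tilde E$ directly. The device that circumvents this is to perform the comparison at the finite level $A\cup H_n$, use $C$ as a uniform upper bound coming from the extremal definition, and only then transfer the estimate to $\tilde E$ via monotone continuity of Lebesgue measure.
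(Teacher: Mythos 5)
Your proposal is correct and follows essentially the same route as the paper: the same extremal constant $C$, the same increasing sequence of $h$-Carleson sets obtained via \thref{carlesonunionprop}, the same finite-level comparison $|A \cup H_n| \leq C$ transferred to $\tilde{E}$ by monotone continuity, and a uniqueness argument that, like the paper's, plays property (i) of one decomposition against property (ii) of the other. No gaps.
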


\begin{proof}
We let the non-negative constant $C$ be the supremum of Lebesgue measures of $h$-Carleson sets which are almost contained in $E$. If $C = 0$ then we are done by taking $E = F$. In the case $C > 0$, let $\{E_n\}_n$ be a sequence of $h$-Carleson sets almost contained in $E$ such that $\lim_{n \to \infty} |E_n| = C$. By \thref{carlesonunionprop}, we can assume that $E_n \subset E_{n+1}$. Take $\Tilde{E} := \cup_n E_n$, and $F = E \setminus \Tilde{E}$. Clearly we have $|\Tilde{E}| = C$, and property $(ii)$ holds.

Assume that $A$ is an $h$-Carleson set almost contained in $E$. Then $A \cup E_n$ is also an $h$-Carleson almost contained in $E$, by \thref{carlesonunionprop}. We have $$C = \lim_{n \to \infty} |E_n| \leq \lim_{n \to \infty} |A \cup E_n|  = |A \cup \Tilde{E}|.$$ However, by definition of $C$ we have that $\lim_{n \to \infty} |A \cup E_n| \leq C$, and so we infer that $$|A \cup \Tilde{E}| = |\Tilde{E}|.$$ Thus $A$ is almost contained in $\Tilde{E}$.

We establish the uniqueness of the decomposition. Let $E = F_1 \cup \Tilde{E}_1$ be some other partition of $E$ where the above stated properties hold and $\Tilde{E}_1$ is (up to Lebesgue measure zero) a union $\cup_n G_n$ of some increasing sequence of $h$-Carleson sets $G_n$ almost contained in $E$. It follows readily that the $h$-Carleson sets $E_n \cup G_n$ are almost contained in the intersection $\Tilde{E} \cap \Tilde{E}_1.$ But then $$|\Tilde{E}_1| = \lim_n |G_n| \leq \lim_n |G_n \cup E_n| \leq |\Tilde{E}_1 \cap \Tilde{E}|,$$ so we must have $\Tilde{E}_1 \subset \Tilde{E}$ up to a set of Lebesgue measure zero. By reversing the argument we see that $\Tilde{E} = \Tilde{E}_1$, up to a set of Lebesgue measure zero, and so the uniqueness of the decomposition is established.
\end{proof}

\begin{definition}
Let $E$ be any measurable subset of $\T$, and consider any decomposition of the form $E = (F \cup \Tilde{E}) \setminus N$ as in \thref{Edecomp}. We will say that $F$ and $\Tilde{E}$ form the \textit{core-residual decomposition} of $E$ with respect to $h$. We will use the notation $$F = \res(E)$$ and $$\Tilde{E} = \co(E),$$ and call the two pieces for the \textit{residual} and the \textit{core}, respectively, of the measurable subset $E$. 
\end{definition}

Note that $\res(E)$ and $\co(E)$ are defined, up to a set of Lebesgue measure zero, for any measurable subset $E$ of $\T$. The set $N$ with $|N| = 0$ will play no role in our development, so we might without loss of generality assume that $N$ is empty, by altering the set $E$ on a set of Lebesgue measure zero.

The core-residual decomposition does not change if we pass to a portion of $E$ contained in an interval, which is a fact that we will often implicitly use.

\begin{prop}
Let $E$ be a measurable subset of $\T$ and $I$ be an interval of $\T$. Then $$\res(I \cap E) = I \cap \res(E)$$ and $$\co(I \cap E) = I \cap \co(E).$$
\end{prop}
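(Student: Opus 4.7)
The plan is to prove the statement for the core first, and then deduce the residual version by taking complements. For the core statement, I will establish two inclusions (each up to sets of Lebesgue measure zero): $I \cap \co(E) \subset \co(I \cap E)$ and $\co(I \cap E) \subset I \cap \co(E)$.

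For the first inclusion, I would invoke property $(ii)$ of \thref{Edecomp} to write $\co(E) = \cup_n E_n$ (modulo a null set), where $\{E_n\}_n$ is an increasing sequence of $h$-Carleson sets almost contained in $E$. The main point is that $I \cap E_n$ is essentially an $h$-Carleson set almost contained in $I \cap E$. If $I$ is closed this is straightforward, and in general one replaces $I$ by its closure, which changes $I \cap E_n$ only on at most two points. To verify the $h$-Carleson property, note that $\T \setminus (\overline{I} \cap E_n)$ is the union of $\T \setminus \overline{I}$ (a single open arc) together with the complementary arcs of $E_n$, and the total Carleson sum of this family is still finite; \thref{constrhCarleson} then gives $h$-Carleson. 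Applying property $(i)$ of \thref{Edecomp} to the set $I \cap E$, we conclude that each $\overline{I} \cap E_n$ is almost contained in $\co(I \cap E)$, and taking the countable union yields $I \cap \co(E) \subset \co(I \cap E)$ modulo null sets.

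For the reverse inclusion, let $\co(I \cap E) = \cup_n G_n$ (modulo a null set) where $\{G_n\}_n$ is an increasing sequence of $h$-Carleson sets almost contained in $I \cap E$. Each $G_n$ is almost contained in $E$, so by property $(i)$ of \thref{Edecomp} applied to $E$, each $G_n$ is almost contained in $\co(E)$. On the other hand, each $G_n$ is almost contained in $I$. Combining, $G_n$ is almost contained in $I \cap \co(E)$, and taking the union gives $\co(I \cap E) \subset I \cap \co(E)$ modulo a null set. Together with the first inclusion, this proves $\co(I \cap E) = I \cap \co(E)$ up to Lebesgue measure zero.

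Finally, for the residual, since $\res(E) = E \setminus \co(E)$ and $\res(I \cap E) = (I \cap E) \setminus \co(I \cap E)$ up to null sets, the already-established identity for the core yields
\[
I \cap \res(E) = I \cap (E \setminus \co(E)) = (I \cap E) \setminus (I \cap \co(E)) = (I \cap E) \setminus \co(I \cap E) = \res(I \cap E),
\]
all equalities holding up to sets of Lebesgue measure zero. The only non-routine step is the verification that $\overline{I} \cap E_n$ is $h$-Carleson when $I$ is not closed, which is the small technical obstacle; everything else is bookkeeping with the defining properties of the core-residual decomposition.
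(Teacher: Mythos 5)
Your proof is correct, and it follows exactly the route the paper intends: the paper omits the argument, remarking only that the claim "follows readily from the definition of the decomposition and the intersection statement in \thref{carlesonunionprop}," and your two inclusions via properties $(i)$ and $(ii)$ of \thref{Edecomp} are precisely that bookkeeping. The only cosmetic difference is that where you verify by hand (via \thref{constrhCarleson}) that $\overline{I}\cap E_n$ is $h$-Carleson, you could instead observe that $\overline{I}$ is itself an $h$-Carleson set and cite \thref{carlesonunionprop} directly.
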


The above claim follows readily from the definition of the decomposition and the intersection statement in \thref{carlesonunionprop}. We skip the straight-forward proof. 

The next proposition is also quite simple, but it is our main one in this section. It showcases a connection between the core-residual decompositions and the Hausdorff-type measures introduced in Section \ref{hausdorffsec}. A prototype version appears in \cite[page 303]{havinbook}

\begin{prop}
\thlabel{mainCRestimate}
Let $E$ be any measurable subset of $\T$, and $I$ an interval. Assume that $C$ is a measurable subset of $I$ which is almost contained in $E$. Then $$\Mho\Big(I \setminus C\Big) = \Mho \Big(I \setminus (C \cap \co(E))\Big).$$
\end{prop}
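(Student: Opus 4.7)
The plan is to establish the two inequalities separately. The easy direction follows from monotonicity of $\Mho$: since $C \cap \co(E) \subset C$, we have $I \setminus C \subset I \setminus (C \cap \co(E))$, and any admissible cover for the larger set is admissible for the smaller, giving $\Mho(I \setminus C) \leq \Mho(I \setminus (C \cap \co(E)))$.

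For the reverse inequality, I would fix $\epsilon > 0$ and choose a family $L = \{\ell\}$ of open arcs with $|I \setminus C \setminus \cup_\ell \ell| = 0$ and $\sum_{\ell \in L} h(|\ell|) < \Mho(I \setminus C) + \epsilon$. The key observation is then to analyze the ``remainder'' set
\[
A := \overline{I} \setminus \bigcup_{\ell \in L} \ell.
\]
Two facts need to be verified about $A$. First, $A$ is an $h$-Carleson set: by \thref{constrhCarleson}, $B := \T \setminus \cup_\ell \ell$ is $h$-Carleson (the sum $\sum h(|\ell|)$ is finite), and $\overline{I}$ is trivially $h$-Carleson since its complement is a single open arc whose $h$-value is finite; therefore $A = \overline{I} \cap B$ is $h$-Carleson by \thref{carlesonunionprop}. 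Second, $A$ is almost contained in $E$: by the defining property of $L$, $|A \setminus C| = 0$, and by hypothesis $|C \setminus E| = 0$.

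Now the core-residual decomposition does its work. Applying property $(i)$ of \thref{Edecomp} to $A$, we conclude $A$ is almost contained in $\co(E)$, so $|A \setminus \co(E)| = 0$. Combined with $|A \setminus C| = 0$, this gives $|A \setminus (C \cap \co(E))| = 0$. Translated back, this says precisely that the same family $L$ is an admissible cover for $I \setminus (C \cap \co(E))$ in the definition of $\Mho$, since
\[
I \setminus (C \cap \co(E)) \setminus \bigcup_{\ell \in L} \ell \subset A \setminus (C \cap \co(E))
\]
has Lebesgue measure zero. Hence $\Mho(I \setminus (C \cap \co(E))) \leq \sum_\ell h(|\ell|) < \Mho(I \setminus C) + \epsilon$, and letting $\epsilon \to 0$ closes the argument.

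The main subtlety I anticipate is just bookkeeping around the endpoints of $I$ and the fact that arcs $\ell$ in the cover may stick out beyond $I$; taking the closure of $I$ in the definition of $A$ above and the subadditivity of $h$ should handle this cleanly without changing any of the relevant measures or sums. The conceptual heart of the proof is the single application of property $(i)$ of \thref{Edecomp}, which converts a combinatorial $h$-Carleson condition on the cover's complement into a measure-theoretic containment inside $\co(E)$.
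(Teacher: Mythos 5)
Your proof is correct and follows essentially the same route as the paper's: take a near-optimal cover $L$ of $I \setminus C$, observe that the complementary closed set $A$ is $h$-Carleson and almost contained in $E$, invoke property $(i)$ of \thref{Edecomp} to place $A$ almost inside $\co(E)$, and conclude that the same cover works for $I \setminus (C \cap \co(E))$. Your handling of $A$ as the intersection $\overline{I} \cap (\T \setminus \cup_\ell \ell)$ via \thref{carlesonunionprop} is a slightly tidier way of dealing with the relative-complement technicality than the paper's direct appeal to \thref{constrhCarleson}, but the argument is the same.
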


\begin{proof}
We have the set inclusion $I \setminus C \subseteq I \setminus (C \cap \co(E))$, so the inequality $$\Mho\Big(I \setminus C\Big) \leq \Mho \Big(I \setminus (C \cap \co(E))\Big)$$ is plain. To see the converse inequality, assume that $L$ is a family of open intervals for which we have $$|(I \setminus C) \setminus (\cup_{\ell \in L} \ell)| = 0$$ and \begin{equation}\label{Dcarleson} \sum_{\ell \in L} h(|\ell|) \leq \Mho(I \setminus C) + \epsilon,\end{equation} for some $\epsilon > 0$. Let $A$ be the complement of $\cup_{\ell \in L} \ell$ with respect to $I$. Then $A$ is closed, and by \thref{constrhCarleson} and \eqref{Dcarleson}, it is an $h$-Carleson set. Moreover, clearly $A$ is almost contained in $C$, which is almost contained in $E$ by hypothesis. By the definition of the core-residual decomposition, it follows that $A$ is almost contained in $C \cap \co(E)$, and taking complements again, it follows that $I \setminus (C \cap \co(E))$ is almost contained in $\cup_{\ell \in L} \ell$. Thus, the family $L$ is an admissible "cover" of $I \setminus (C \cap \co(E))$ for the measure $\Mho$. Consequently, we have $$\Mho(I \setminus (C \cap \co(E)) \leq \sum_{\ell \in L} h(|\ell|) \leq \Mho(I \setminus C) + \epsilon.$$ The proof of the proposition is complete by letting $\epsilon$ tend to zero.
\end{proof}

\begin{remark} To make some sense of the above result, take $C = E$ in the statement above. Then the conclusion is that the $\Mho$-size of what remains from the interval $I$ after removing $E$, is in fact the same as the $\Mho$-size of what remains after removing only $\co(E)$. Thus, in a sense, the set $\res(E)$ is invisible for the Hausdorff-type measure $\Mho$.  \end{remark}

\section{A measure construction of Frostman}

\label{frostmansection}

In the next result, we extend slightly the classical construction of the Frostman measures (refer to \cite[page 302]{havinbook} or \cite[Appendix D]{garnett2005harmonic} for details). The properties $(ii)$ and $(iii)$ in the proposition below appear classically as features of the Frostman measures, and we obtain a useful additional feature $(i)$ through a simple averaging process.

\begin{prop} \thlabel{frostmanmeasureprop}
Let $U = \cup_n \alpha_n$ be an open subset of $\T$, where $\{\alpha_n\}_n$ is a disjoint family of open intervals. There exists a positive absolutely continuous Borel measure $\nu$ which satisfies the following three properties:

\begin{enumerate}[(i)]
    \item $\nu= \sum_{n=1}^N c_n 1_{\alpha_n}$, where $N$ is some finite integer, and $c_n$ are positive numbers,
    \item $\nu(\Delta) \leq h(|\Delta|)$ for each interval $\Delta$ of $\T$,
    \item $\nu(\T) \geq \Mhd(U)/24$.
\end{enumerate}
\end{prop}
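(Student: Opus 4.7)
The plan is to build $\nu$ in three stages: a reduction, a dyadic Frostman construction, and an averaging step.

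First, reduce to a finite union. By the continuity property \thref{Mhdcont} applied to the nested sequence $U_N := \bigcup_{n \leq N} \alpha_n \nearrow U$, I can pick $N$ large enough that $\Mhd(U_N) \geq \Mhd(U)/2$. From now on I work only with $U_N$, a finite disjoint union of arcs; any measure whose density is constant on each $\alpha_n$ will then automatically have the form (i).

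Second, I carry out the standard dyadic Frostman construction on $U_N$. Choose a very fine generation $n_0$, initialize $\mu$ by placing uniformly distributed mass $h(|d|)$ on each $d \in \mathcal{D}_{n_0}$ with $d \subseteq U_N$, and then iterate through the generations $n = n_0-1, n_0-2, \dots, 0$: whenever the accumulated mass of a dyadic interval $d$ exceeds $h(|d|)$, rescale the measure on $d$ uniformly by the factor $h(|d|)/\mu(d)$. The resulting $\mu$ is absolutely continuous, supported in $U_N$, and satisfies $\mu(d) \leq h(|d|)$ for every $d \in \mathcal{D}$. The maximal family $\mathcal{F}$ of \emph{saturated} dyadic intervals (those where $\mu(d) = h(|d|)$ but no proper ancestor is saturated) is pairwise disjoint, covers $U_N$ up to a handful of boundary cells of generation $n_0$, and identifies the total mass as $\mu(\T) = \sum_{d \in \mathcal{F}} h(|d|)$. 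Since $\mathcal{F}$ is then an admissible cover of $U_N$ in the sense of \eqref{Mhddef} (modulo an error in $\Mhd$ that vanishes as $n_0 \to \infty$), this gives $\mu(\T) \geq \Mhd(U_N)$. Finally, any interval $\Delta$ is covered by at most two dyadic intervals of length $\leq 2|\Delta|$, and subadditivity of $h$ upgrades the dyadic bound to $\mu(\Delta) \leq 4 h(|\Delta|)$ for all intervals.

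Third, I average onto arcs. Set $c_n := \mu(\alpha_n)/|\alpha_n|$ and $\nu := \sum_{n \leq N} c_n 1_{\alpha_n}$, dropping those $n$ with $c_n = 0$, so that $\nu(\T) = \mu(\T)$ and (i) is satisfied. For an arbitrary interval $\Delta$, partition the arcs meeting $\Delta$ into the collection $\mathcal{I}$ of $\alpha_n$ entirely inside $\Delta$ and the at-most-two boundary arcs $\alpha_{n_1}, \alpha_{n_2}$ containing an endpoint of $\Delta$. The fully-contained contribution is $\sum_{\alpha_n \in \mathcal{I}} c_n |\alpha_n| = \sum_{\alpha_n \in \mathcal{I}} \mu(\alpha_n) \leq \mu(\Delta) \leq 4 h(|\Delta|)$. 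Each boundary contribution equals $(\mu(\alpha_{n_i})/|\alpha_{n_i}|) \cdot |\Delta \cap \alpha_{n_i}|$, and using $\mu(\alpha_{n_i}) \leq 4 h(|\alpha_{n_i}|)$ together with the monotonicity of $h(t)/t$, this is at most $4 h(|\Delta \cap \alpha_{n_i}|) \leq 4 h(|\Delta|)$. Summing gives $\nu(\Delta) \leq 12 h(|\Delta|)$, so after dividing $\nu$ by $12$ the bound (ii) holds and $\nu(\T) \geq \Mhd(U_N)/12 \geq \Mhd(U)/24$, yielding (iii).

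The main obstacle I expect is the boundary-arc portion of the third stage: for an arc $\alpha_{n_i}$ that sticks out of $\Delta$, naive averaging redistributes its full Frostman mass uniformly and could in principle overfill $\Delta \cap \alpha_{n_i}$. What saves the estimate is precisely the monotonicity $h(t)/t \searrow$ required of measure functions, coupled with the geometric fact that a single interval can partially overlap at most two arcs of a disjoint family. Combining this with the factor $4$ lost in the dyadic-to-arbitrary passage and the factor $2$ lost in the reduction from $U$ to $U_N$ is what produces the advertised constant $24$ in (iii); beyond this constant-bookkeeping, the argument is largely the standard Carleson/Frostman scheme.
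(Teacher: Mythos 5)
Your proposal is correct and follows essentially the same path as the paper: reduce to finitely many arcs via the continuity of $\Mhd$, run the standard (dyadic) Frostman construction to get a measure $\tilde\nu$ with $\tilde\nu(\Delta)\leq 4h(|\Delta|)$ and mass at least $\Mhd(U)/2$, then average over each $\alpha_n$ and verify that averaging costs at most a factor $3$. The only notable difference is in that last verification: where the paper uses a Fubini-type computation to bound the averaged density on a subinterval of $\alpha_n$, you invoke the monotonicity of $h(t)/t$ directly on the two boundary arcs, which is a clean and equally valid shortcut to the same constant $12$.
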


\begin{proof}
By \thref{Mhdcont} there exist a positive integer $N$ and closed intervals $\{d'_n\}_{n=1}^N$ such that $d'_n \subset \alpha_n$, and if $D = \cup_{n=1}^N d'_n$, then $\Mhd(D) \geq \Mhd(U)/2$. The usual Frostman measure construction (see, for instance, \cite[page 302]{havinbook}) will produce a measure $\tilde{\nu}$ which is of the form $\tilde{\nu} = \sum_{n=1}^N \tilde{\nu}_n$, with $\tilde{\nu}_n$ supported on a closed interval $d_n$ living inbetween $d'_n$ and $\alpha_n$, which satisfies $\tilde{\nu}(\Delta) \leq 4 h(|\Delta|)$ for any interval $\Delta \subset \T$, and $\tilde{\nu}(D) \geq \Mhd(D)$. Since $D \subset U$, we also have $\tilde{\nu}(U) = \tilde{\nu}(D) \geq \Mhd(D) \geq \Mhd(U)/2$. Now we replace the measure $\tilde{\nu}$ by its average over every $\alpha_n$. That is, we take $c_n = \tilde{\nu}_n(\alpha_n)|\alpha_n|^{-1}$, and we define \begin{equation}
    \label{muaveragedef} \nu := \sum_{n=1}^N c_n 1_{\alpha_n}. 
\end{equation} 
It is obvious that $\nu(U) = \tilde{\nu}(U)$, and even that $\nu(\alpha_n) = \tilde{\nu}(\alpha_n)$. Moreover, we have not destroyed the smoothness of the measure either, as we will show that $\nu(\Delta) \leq 12 h(|\Delta|)$ for any interval $\Delta$. To this end, fix $n \in \{1,2, \ldots, N\}$ and let $\delta$ be any positive number which is smaller than the length of the interval $\alpha_n$. Let $a$ and $b$ be the endpoints of $\alpha_n$. By Frostman's construction, we have that $\tilde{\nu}_n([t, t+\delta]) \leq 4h(\delta)$ for any $t \in [a, b-\delta]$. It follows then that \begin{equation*}
    \int_a^{b-\delta} \tilde{\nu}_n([t, t+\delta]) dt \leq 4(b-\delta -a)h(\delta) \leq 4|\alpha_n|h(\delta).
\end{equation*}
The left-hand side of this equation can be re-written using Fubini's theorem:
\begin{gather*}
\int_a^{b-\delta} \tilde{\nu}_n([t, t+\delta]) dt = \int_a^{b - \delta} \int_{\alpha_n} 1_{t \leq s \leq t + \delta} \, d\tilde{\nu}_n(s) dt \\ = \int_{\alpha_n} \int_a^{b - \delta} 1_{t \leq s \leq t + \delta} \, dt \, d\tilde{\nu}_n(s) = \delta \tilde{\nu}_n(\alpha_n) = \delta c_n |\alpha_n|.
\end{gather*}
Let $\Delta$ be any subinterval of $\alpha_n$ of length $|\Delta| = \delta$. Then, from the above computations and \eqref{muaveragedef}, we get $$\nu(\Delta) = \delta c_n  \leq 4 h(|\Delta|).$$

Finally, let $\Delta$ be an arbitrary interval. Then $\Delta$ can be split up into at most three contiguous intervals $\Delta_1 \cup \Delta_2 \cup \Delta_3$, such that $\Delta_1$ and $\Delta_3$ are partially contained in some intervals $\alpha_{n_1}$ and $\alpha_{n_2}$, and $\Delta_2$ totally covers the intervals $\alpha_n$ which lie inbetween $\alpha_{n_1}$ and $\alpha_{n_2}$. For the interval $\Delta_2$ we have $\nu(\Delta_2) = \tilde{\nu}(\Delta_2) \leq 4 h(|\Delta_2|) \leq 4 h(|\Delta|)$, and we have above verified that $\nu(\Delta_1) \leq 4h(|\Delta_1|)$ and $\nu(\Delta_3) \leq 4h(|\Delta_3|)$. Thus $\nu(\Delta) \leq 12 h(|\Delta|)$. Upon division of the measure $\nu$ by 12, we finally obtain a measure satisfying $\nu(\Delta) \leq h(|\Delta|)$ for all intervals $\Delta \subset \T$, and $\nu(U) \geq \Mhd(U)/24$.

\end{proof}

\section{A real-variables construction}

\label{realvariablessection}

In this section we follow more or less the same path as Khrushchev does in \cite[Sections 4 and 10]{khrushchev1978problem}. We utilize the core-residual decomposition to handle the complications that arise due to our dealing with a completely general measurable subset $E$. 

Before going into the main construction, we single out the following technical lemma which appears in a slightly more complicated version in Khrushchev's work in \cite[Section 10]{khrushchev1978problem}. We include a proof for the readers convenience. 

\begin{lemma} \thlabel{khrushchevsetlemma}
Let $F$ be a measurable subset of an interval $I$. For any $\epsilon > 0$, there exists a closed subset $B$ of $I$ which is almost contained in $F$, and for any interval $\ell$ which is complementary to $B$ in $I$ it holds that $$|\ell \cap F| \leq \epsilon |\ell|.$$
\end{lemma}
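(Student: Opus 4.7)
The plan is to build $B$ by a Calderón--Zygmund-style dyadic stopping time. View $I$ as the root of a binary tree: at level $n$ partition $I$ into $2^n$ closed sub-intervals of length $|I|\cdot 2^{-n}$. Call such a dyadic interval $d \subseteq I$ \emph{good} if $|d \cap F| \leq \epsilon |d|$, and \emph{bad} otherwise. Let $B_n$ be the union of the bad dyadic intervals at level $n$ (a closed set, being a finite union of closed intervals), and set
\[
B := \bigcap_{n \geq 0} B_n.
\]
As an intersection of closed sets, $B$ is closed.

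To verify that $|B \setminus F| = 0$, fix $x \in B$. By construction, the dyadic ancestor $d_n(x) \ni x$ at every level $n$ satisfies $|d_n(x) \cap F| > \epsilon |d_n(x)|$. Writing $r_n := |d_n(x)|$, the inclusion $d_n(x) \subseteq [x - r_n, x + r_n]$ gives
\[
\frac{\lvert F \cap [x - r_n, x + r_n]\rvert}{2 r_n} > \frac{\epsilon}{2}
\]
for the sequence $r_n \to 0$, so the upper symmetric density of $F$ at $x$ is at least $\epsilon/2 > 0$. Lebesgue's differentiation theorem asserts that for almost every $x \in I \setminus F$ the density of $F$ at $x$ equals $0$, so such points cannot lie in $B$; hence $|B \setminus F| = 0$.

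For the density estimate on complementary arcs, call a good dyadic interval $d \subseteq I$ \emph{maximal good} if no strictly larger dyadic interval containing $d$ is itself good, and let $\mathcal{M}$ denote the collection of all maximal good dyadics. The nested structure of dyadic intervals combined with the maximality condition forces any two distinct members of $\mathcal{M}$ to be disjoint, while every point $x \in I \setminus B$ lies in a (unique) maximal good ancestor, namely the largest good dyadic containing $x$. Consequently $I \setminus B = \bigsqcup_{d \in \mathcal{M}} d$ modulo a countable set of shared endpoints, and each connected component $\ell$ of $I \setminus B$ is a chain $d_1 \cup d_2 \cup \cdots$ of adjacent members of $\mathcal{M}$. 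Since these are disjoint with union $\ell$,
\[
|\ell \cap F| = \sum_i |d_i \cap F| \leq \epsilon \sum_i |d_i| = \epsilon |\ell|,
\]
as required.

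The only real obstacle I anticipate is bookkeeping at dyadic endpoints: closed dyadic intervals at differing scales can share endpoints, so one must take a little care to ensure that $B$ really is closed (handled by defining $B_n$ as a union of \emph{closed} bad dyadics) and that a connected component of $I \setminus B$ is exactly a chain of maximal good dyadics. These endpoint sets are countable, hence Lebesgue null, and so they do not affect any of the density computations; once this minor issue is addressed the proof reduces to the dyadic construction above, one invocation of Lebesgue's differentiation theorem, and a single countably additive density inequality.
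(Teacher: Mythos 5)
Your argument is correct. The dyadic stopping-time construction does produce a closed set $B=\bigcap_n B_n$; the Lebesgue density theorem rules out points of $I\setminus F$ (outside a null set) from lying in $B$, since membership in $B$ forces upper density of $F$ at least $\epsilon/2$ along the dyadic scales; and the decomposition of $I\setminus B$ into maximal good dyadics with pairwise disjoint interiors yields $|\ell\cap F|\le\sum_i|d_i\cap F|\le\epsilon\sum_i|d_i|\le\epsilon|\ell|$ for each complementary interval $\ell$. The endpoint bookkeeping you flag is indeed the only delicate point, and you handle it adequately (a shared endpoint can belong to $B$ even when it lies on the boundary of a maximal good dyadic, but this only affects a countable, hence null, set and does not disturb either the density computation or the covering of $\ell$).

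This is a genuinely different route from the paper's. The paper works from the other side: it takes the Lebesgue density points $x$ of $I\setminus F$, attaches to each an interval $J(x,r)$ whose five-fold dilation satisfies $|J(x,5r)\cap F|\le\epsilon|J(x,r)|$, defines $B$ as the complement of $\bigcup_x J(x)$, and then invokes the $5r$-covering theorem to extract a disjoint subfamily whose dilations cover a given complementary arc $\ell$, giving $|\ell\cap F|\le\epsilon|\ell|$. Both proofs rest on the Lebesgue density theorem, but you use it to show $B$ is almost contained in $F$ (bad dyadics shrink to points of positive upper density), whereas the paper uses it to guarantee enough removable intervals around $I\setminus F$; and where the paper pays with the Vitali $5r$-covering lemma, you pay with the maximal-good-dyadic bookkeeping. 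Your version has the mild aesthetic advantage that the constant $\epsilon$ is not degraded by a dilation factor and that no covering theorem beyond dyadic nesting is needed; the paper's version avoids the endpoint issues entirely because the removed intervals are genuinely open and centered at the points of interest. Either proof serves the application in \thref{fnsequenceprop} equally well.
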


\begin{proof}
Consider the set $X$ of points $x$ which are contained in $I \setminus F$ and which are Lebesgue density points of $I \setminus F$, in the sense that for the intervals $J(x, r)$ with center $x$ and length $r$, we have \begin{equation} \label{lebesguedensity}
\lim_{r \to 0} \frac{|J(x,r) \cap F|}{r} = \lim_{r \to 0} \frac{|J(x,r) \cap F|}{|J(x,r)|} = 0.
\end{equation} The set $X$ has full measure in $I \setminus F$. For each such $x$ we pick an interval $J(x) = J(x,r)$ with $r = r(x)$, such that the interval which has five times the length has small intersection with $F$: $$|J(x,5r) \cap F| \leq \epsilon |J(x,r)|.$$ 
Then the set $$B := I \setminus (\cup_{x \in X} J(x))$$ satisfies the required properties. Since $|X| = |I \setminus F|$, certainly $B$ is almost contained in $F$. Let $\ell$ be any interval complementary to $B$. Then $\ell$ is the union of those intervals $J(x)$ which are contained in it. By the classical $5r$-covering theorem (see \cite[page 23]{mattila1999geometry}) there exists a countable subfamily $\{J(x_n)\}_n$ of pairwise disjoint intervals which are contained in $\ell$ and such that $\ell$ is contained in the union $\cup_{n} J(x_n,5r)$. Hence $$|\ell \cap F| \leq \sum_n |J(x_n,5r) \cap F| \leq \epsilon \sum_n |J(x_n,r)| \leq \epsilon|\ell|,$$ where the last inequality follows by the disjointness of $J(x_n)$.
\end{proof}

The following construction is similar to one appearing in Khrushchev's work \cite[Section 4]{khrushchev1978problem}, but additional technical difficulties arise because of the presence of the set $\co(E)$. It is here that \thref{mainCRestimate} will be used to obtain a crucial lower bound on a certain mass. 

\begin{prop} \thlabel{fnsequenceprop}
Let $E$ be a measurable subset of $\T$. There exists a sequence of real-valued bounded measurable functions $\{f_n\}_n$ on $\T$ which satisfy the following properties:

\begin{enumerate}[(i)]
\item $\int_\Delta f_n \, d\m \leq 2 h(|\Delta|)$ for any interval $\Delta$ of $\T$,
\item $\int_\T f_n \, d\m = 0$,
\item $f_n(x) = 0$ for almost every $x \in \co(E)$,
\item $f_n(x) \leq 0$ for almost every $x \in \res(E)$,
\item $\lim_{n \to \infty} f_n(x) = -\infty$ uniformly for $x \in \res(E)$, up to a set of Lebesgue measure zero.
\end{enumerate}
\end{prop}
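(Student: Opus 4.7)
The plan is to construct $f_n$ by a dyadic-scale local balancing scheme. For each $n$ I will partition $\T$ into dyadic intervals $d \in \mathcal{D}_{N}$ of a sufficiently fine generation $N = N(n) \to \infty$, and build $f_n$ cell-by-cell: on each $d$ the function $f_n$ will carry a large negative mass $-M_n$ uniformly on $d \cap \res(E)$, balanced locally by a non-negative Frostman-controlled density $g_d$ supported on $d \setminus E$, so that $\int_d f_n \, d\m = 0$. The parameter $M_n$ will be chosen proportional to $h(2^{-N})/2^{-N}$; this quantity diverges as $N \to \infty$ because $h(t)/t$ is unbounded at the origin, and will deliver property (v). The case $|\res(E)| = 0$ is trivial with $f_n \equiv 0$, so we may assume $|\res(E)| > 0$ throughout.

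Fix $n$ and $d \in \mathcal{D}_{N}$ with $|d \cap \res(E)| > 0$. First, applying \thref{khrushchevsetlemma} inside the interval $d$ with $F = d \cap E$ and a fixed $\epsilon = 1/2$, I obtain a closed set $B_d \subseteq d$ almost contained in $d \cap E$ whose complementary intervals $\{\ell_k\}$ satisfy $|\ell_k \cap E| \leq |\ell_k|/2$. The open set $V_d := d \setminus B_d$ then contains $d \setminus E$ up to a null set and has $|V_d \cap E| \leq |V_d|/2$. Next, \thref{frostmanmeasureprop} applied to $V_d$ produces a measure $\nu_d = \sum_k c_k 1_{\ell_k}$ with $\nu_d(\Delta) \leq h(|\Delta|)$ for every interval $\Delta \subseteq \T$ and total mass $\nu_d(V_d) \geq \Mhd(V_d)/24$. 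The crucial quantitative input is the lower bound
\[
\Mhd(V_d) \;\geq\; \Mhdo(V_d) \;\geq\; \Mhdo(d \setminus E) \;\geq\; \Mho(d \setminus E) \;=\; \Mho(d \setminus \co(E)) \;\geq\; h(|d \cap \res(E)|),
\]
which combines monotonicity and the small-measure version of \thref{mhequiv}, the identity from \thref{mainCRestimate} (applied with $I = d$, $C = d \cap E$), and finally \thref{mhinequalities} together with the obvious inclusion $\res(E) \cap d \subseteq d \setminus \co(E)$. Restricting the density of $\nu_d$ to $d \setminus E$ retains at least half of its mass by the $\epsilon = 1/2$ property of $B_d$, giving a function $\tilde g_d := (d\nu_d/d\m) \cdot 1_{d \setminus E}$ with $\int \tilde g_d \, d\m \geq h(|d \cap \res(E)|)/48$ and still $\int_\Delta \tilde g_d \, d\m \leq h(|\Delta|)$. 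Setting $M_n := h(2^{-N})/(48 \cdot 2^{-N})$, the rescaling $g_d := \lambda_d \tilde g_d$ with $\lambda_d := M_n |d \cap \res(E)| / \int \tilde g_d \, d\m$ satisfies $\lambda_d \leq 1$ (because $|d \cap \res(E)| \leq 2^{-N}$ and $h(t)/t$ is decreasing), so $g_d$ is non-negative, bounded, supported on $d \setminus E$, with $\int_d g_d \, d\m = M_n |d \cap \res(E)|$ and $\int_\Delta g_d \, d\m \leq h(|\Delta|)$ for every interval $\Delta$.

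Now set $f_n|_d := g_d - M_n 1_{d \cap \res(E)}$ when $|d \cap \res(E)| > 0$, and $f_n|_d := 0$ otherwise, and assemble $f_n := \sum_{d \in \mathcal{D}_N} f_n|_d$. Property (ii) follows from the cell-wise balance $\int_d f_n|_d\, d\m = 0$; property (iii) follows because $g_d$ is supported on $d \setminus E \subseteq \T \setminus \co(E)$ and $1_{\res(E)}$ vanishes on $\co(E)$; properties (iv) and (v) follow because on $\res(E)$ the density $g_d$ vanishes and $f_n = -M_n \to -\infty$ uniformly. For the local bound (i), an interval $\Delta$ contained in a single $d$ gives $\int_\Delta f_n \, d\m \leq g_d(\Delta) \leq h(|\Delta|)$; for a $\Delta$ meeting several dyadic cells, whole cells contribute $0$ while the (at most two) partial endpoint cells each contribute at most $h(|\Delta \cap d|) \leq h(|\Delta|)$, for a total of at most $2 h(|\Delta|)$. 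I expect the main obstacle to be the quantitative mass lower bound $\Mhd(V_d) \geq h(|d \cap \res(E)|)$ in the previous paragraph: producing enough Frostman mass on the tiny set $d \setminus E$ to compensate the negative mass on $d \cap \res(E)$ is precisely where \thref{mainCRestimate} is essential, as it expresses the $\Mho$-invisibility of the residual and is what separates the general situation from the purely Beurling--Carleson case treated by Khrushchev.
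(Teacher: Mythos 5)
Your construction is correct and follows essentially the same route as the paper: dyadic partition, \thref{khrushchevsetlemma} with $\epsilon=1/2$, the Frostman measure of \thref{frostmanmeasureprop} on the complementary open set, restriction of its density to $d\setminus E$, and the key lower bound on the retained mass via \thref{mainCRestimate} and \thref{mhinequalities}. The only (harmless) cosmetic differences are that you apply \thref{mainCRestimate} to $C=d\cap E$ after a monotonicity step rather than to the closed set $B_d$ directly, and that you rescale the positive part by $\lambda_d\leq 1$ to make the negative level exactly $-M_n$ on every cell, whereas the paper keeps the full positive mass and bounds the resulting cell-dependent negative level from below by the same quantity $h(|I|)/(48|I|)$.
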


\begin{proof}
Fix a positive integer $n$ and recall the definitions \eqref{dyadicintervals} and \eqref{Dnintervals} of the $n$th generation dyadic intervals. We will show how to define $f_n$ piece by piece on the interior $I$ of any dyadic interval in $\mathcal{D}_n$. If $|I \cap \res(E)| = 0$, then we simply set $f_n \equiv 0$ on $I$. 

In the case $|I \cap \res(E)| > 0$, we apply \thref{khrushchevsetlemma} with data $F = I \cap E$ and $\epsilon = 1/2$. The lemma produces a closed set $B := H_I$ contained in $I$ and almost contained in $E$, with the property that if $\ell$ is an interval complementary to $H_I$ in $I$, then $|\ell \cap E| \leq |\ell|/2$. We next apply \thref{frostmanmeasureprop} to the open set $U = I \setminus H_I$. Let $$\tilde{\nu}_I = \sum_{n=1}^N c_n 1_{\alpha_n}$$ be the measure produced by that proposition, where $\alpha_n$ are open arcs complementary to $H_I$ and contained in $I$. This measure satisfies \begin{equation}
    \label{mutildemoc} \tilde{\nu}_I(\Delta) \leq h(|\Delta|)
\end{equation} for any interval $\Delta$ of $\T$, and \begin{equation} \label{mutildemass} \tilde{\nu}_I(I) \geq \Mhd(I \setminus H_I)/24.\end{equation} We produce a new smaller measure $\nu_I$ by restricting $\tilde{\nu}_I$ to the set $I \setminus E$. Thus, we set $\beta_n = \alpha_n \cap (I \setminus E)$ and let $$\nu_I := \sum_{n=1}^N c_n 1_{\beta_n}.$$ Note that \begin{gather*}
    \tilde{\nu}_I(I) = \sum_{n=1}^N c_n|\alpha_n| = \nu_I(I) + \sum_{n=1}^N c_n|\alpha_n \cap E| \\ \leq \nu_I(I) +  \Big(\sum_{n=1}^N c_n|\alpha_n|\Big)/2 = \nu_I(I) +  \tilde{\nu}_I(I)/2
\end{gather*} where we used that $|\alpha_n \cap E| \leq  |\alpha_n|/2$, a consequence of \thref{khrushchevsetlemma}. Hence we obtain that $$\nu_I(I) \geq \tilde{\nu}_I(I)/2.$$ Together with \eqref{mutildemass}, this gives $$\nu_I(I) \geq \Mhd(I \setminus H_I)/48,$$ and clearly $$\Mhd(I \setminus H_I)/48 \geq \Mho(I \setminus H_I)/48.$$
Further, using \thref{mainCRestimate} and \thref{mhinequalities} we obtain \begin{gather*} \Mho(I \setminus H_I)  = \Mho(I \setminus (H_I \cap \co(E))) \\ \geq \Mho(I \setminus \co(E)) \geq h(|I \setminus \co(E)|).\end{gather*} All in all, the string of inequalities gives us a lower estimate for the mass that $\nu_I$ places on $I$: \begin{equation}
    \label{finallowerestimate}
    \nu_I(I) \geq \frac{h(|I \setminus \co(E)|)}{48}.
\end{equation} 
We define $f_n$ on the interior of the interval $I$ by the formula \begin{equation}
    \label{fnIdef}
    f_n|I = \sum_{n=1}^N c_n 1_{\beta_n} - \frac{\nu_I(I)}{|I \cap \res(E)|}1_{I \cap \res(E)}.
\end{equation} For definiteness let us put $f_n$ to equal zero on the endpoints of $I$. The functions $f_n$ are clearly bounded. Our construction ensures that \begin{equation}
    \label{fnvanishI}\int_I f_n dm = 0
\end{equation} for any $I$ which is a dyadic arc of $n$th generation. Consequently, $$\int_\T f_n d\m = 0$$ also holds, so $(ii)$ above is satisfied. The properties $(iii)$ and $(iv)$ also hold. If $\Delta$ is an interval and it is contained in a $n$th generation dyadic interval $I$, then $$\int_\Delta f_n d\m = \nu_I(\Delta) - \sum_{n=1}^N c_n|\beta_n \cap \Delta| \leq \tilde{\nu}_I(\Delta) \leq h(|\Delta|).$$ If $\Delta$ is an interval, but it is not contained in any single such interval $I$, then we can split $\Delta$ up into contiguous intervals $\Delta_1 \cup \Delta_2 \cup \Delta_3$, such that $\Delta_1$ and $\Delta_3$ are contained in some dyadic intervals $I_1, I_2 \in \mathcal{D}_n$, and $\Delta_2$ is a union of the dyadic intervals in $\mathcal{D}_n$ which lie inbetween $I_1$ and $I_2$. Then, using \eqref{fnvanishI}, we can estimate $$\int_\Delta f_n d\m = \int_{\Delta_1} f_n d\m + \int_{\Delta_3} f_n d\m \leq 2 h(|\Delta|).$$ So $(i)$ also holds. Finally, consider $x \in \res(E)$. Using \eqref{finallowerestimate}, the obvious inequality $$|I \setminus \co(E)| \geq |I \cap \res(E)|$$ and the properties of $h$ postulated in Section \ref{hausdorffsec}, we have \begin{gather*}
    -f_n(x) = \frac{\nu_I(I)}{|I \cap \res(E)|} \geq \frac{h(|I \setminus \co(E)|}{48 |I \cap \res(E)|} \geq \frac{h(|I \cap \res(E)|)}{48 |I \cap \res(E)|} \geq \frac{h(|I|)}{48 |I|}.
\end{gather*} But $|I| = 2\pi 2^{-n}$, and consequently by the assumption that $\lim_{t \to 0^+} h(t)/t = \infty$ we conclude that $$\lim_{n \to \infty} -f_n(x) = \infty$$ for $x \in \res(E)$. This settles $(v)$, and the proof is complete.
\end{proof}

\section{A sequences which splits $\Po^2(\mu)$}

\label{splitsection}

We will now show that for a large family of measures $\mu$, the condition that $|\res(E)| > 0$ for the carrier set of $E$ of $\mu_\T$ implies that $\Po^2(\mu)$ contains an $L^2$-summand. We start by recalling some well-known results which will be used in the proof.

For a real-valued measure $\nu$ living on $\T$ we define its \textit{Herglotz integral} by the formula $$H_\nu(z) := \int_\T \frac{\zeta + z}{\zeta - z} d\nu(\zeta), \quad z \in \D.$$ The function $H_\nu$ is analytic in $\D$, and its real part $\Re(H_\nu)$ is the usual Poisson integral $P_\nu$ of $\nu$.  It is well-known that if $d\nu = fd\m$ is absolutely continuous, then $|H_\nu(z)| = \exp(|f(z)|)$ for almost every $z \in \T$, in the sense of the usual non-tangential boundary values.

The following estimate for the growth of $P_\nu$ in terms of smoothness of the measure $\nu$ (or more precisely, smoothness of its positive part) is also well-known (see, for instance, \cite[page 297]{havinbook}).

\begin{lemma} \thlabel{poissonintGrowth}
Let $h$ be a measure function satisfying the properties stated in Section \ref{hausdorffsec}. There exists an absolute constant $C > 0$ such that if $\nu(\Delta) \leq h(|\Delta|)$ for any interval $\Delta$ in $\T$, then 
\begin{equation}
    \label{Poissonestimate}
    P_\nu(z) = \Re(H_\nu(z)) \leq C \frac{h(1-|z|)}{1-|z|}, \quad z \in \D.
\end{equation}
\end{lemma}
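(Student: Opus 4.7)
The plan is to estimate $P_\nu(z) = \int_\T \frac{1-|z|^2}{|\zeta-z|^2}\, d\nu(\zeta)$ by a dyadic fan decomposition of $\T$ around the boundary point $z/|z|$ closest to $z$, then apply the hypothesis $\nu(\Delta) \leq h(|\Delta|)$ on each arc and exploit the monotonicity of $h(t)/t$ to sum a geometric series. Fix $z \in \D$ and write $r = 1-|z|$. Let $\Delta_0 \subset \T$ be an arc of length comparable to $r$ centered at $z/|z|$, and for each $k \geq 1$ let $\Delta_k$ denote the arc of length comparable to $2^k r$ with the same center, so that $\{\Delta_0\} \cup \{\Delta_k \setminus \Delta_{k-1}\}_{k \geq 1}$ partitions $\T$ (truncating at the first $k$ for which $\Delta_k$ already covers $\T$).

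On $\Delta_0$ the Poisson kernel satisfies $\frac{1-|z|^2}{|\zeta-z|^2} \leq \frac{C_0}{r}$, giving a contribution of at most $C_0 h(r)/r$ by the smoothness hypothesis applied to $\Delta_0$. On an annular piece $\Delta_k \setminus \Delta_{k-1}$ with $k \geq 1$, every $\zeta$ satisfies $|\zeta - z| \gtrsim 2^k r$, so the Poisson kernel is bounded by a constant multiple of $r/(2^k r)^2 = 1/(2^{2k} r)$, while $\nu(\Delta_k) \leq h(2^k r)$. Summing the contributions yields
\begin{equation*}
    P_\nu(z) \;\leq\; \frac{C_0 h(r)}{r} \;+\; \frac{C_1}{r}\sum_{k \geq 1} \frac{h(2^k r)}{2^{2k}}.
\end{equation*}
The last step is to convert the sum back into $h(r)/r$, which is where property $(ii)$ of measure functions enters: since $h(t)/t$ is decreasing, $h(2^k r)/(2^k r) \leq h(r)/r$, i.e.\ $h(2^k r) \leq 2^k h(r)$. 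Substituting collapses the sum into the geometric series $\sum_{k \geq 1} 2^{-k}$, producing the bound $P_\nu(z) \leq C\, h(r)/r$ with an absolute constant $C$.

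The main obstacle is purely a matter of bookkeeping rather than genuine difficulty: the inequality $h(2^k r) \leq 2^k h(r)$ only makes sense while $2^k r$ stays within the domain of $h$, and for large $k$ the arc $\Delta_k$ saturates $\T$. Both issues are handled using the remark at the start of Section \ref{hausdorffsec} that $h$ can be extended to all of $(0, \infty)$ preserving its structural properties, together with the observation that once $\Delta_k$ covers $\T$ the tail of the sum contributes at most a constant multiple of $\nu(\T)/r \leq h(1)/r$, which is absorbed into the same final bound. This extension/truncation lets the argument be carried out uniformly in $z$ with a single absolute constant $C$.
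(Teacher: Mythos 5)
Your argument is correct, and it is the standard proof of this classical estimate; note that the paper itself does not prove the lemma but simply cites \cite[page 297]{havinbook}, so there is no in-paper proof to compare against. Your decomposition into the central arc $\Delta_0$ and the dyadic annuli $\Delta_k \setminus \Delta_{k-1}$, the kernel bounds $\lesssim 1/r$ on $\Delta_0$ and $\lesssim 1/(2^{2k}r)$ on the $k$-th annulus, and the conversion $h(2^k r) \leq 2^k h(r)$ via the monotonicity of $h(t)/t$ are exactly the right ingredients, and together they give $P_\nu(z) \lesssim h(r)/r$ with an absolute constant. One small blemish in your closing remark: the literal claim that the saturated tail contributes at most $\nu(\T)/r \leq h(1)/r$ and that this is ``absorbed into the same final bound'' does not work as written, since $h(1) \leq C\,h(r)$ fails as $r \to 0$; the correct observation is either that the extension of $h$ to $(0,\infty)$ (which the paper's Section \ref{hausdorffsec} explicitly permits) makes the geometric-series estimate valid for \emph{every} annulus including the last, or that on the final annulus the kernel is bounded by a constant multiple of $r$ rather than $1/r$, giving a contribution $\lesssim r\,\nu(\T) \leq h(1) \leq h(r)/r$. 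Since your primary route through the extended $h$ already settles the matter, this is a cosmetic issue rather than a gap.
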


We will also need the following approximation result which is a direct consequence of the classical Beurling-Wiener theorem. For a proof, see \cite{helsonbook}, for instance.

\begin{lemma} \thlabel{beurling-wiener}
Let $E$ be a measurable subset of $\T$ which satisfies $|E| < |\T|$. If a function $g \in L^2(E)$ is non-zero almost everywhere on a set $F \subset E$, then the norm-closure of the analytic polynomial multiples of $g$ in $L^2(E)$ contains the space $L^2(F)$.
\end{lemma}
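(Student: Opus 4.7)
The plan is to invoke the classification of closed shift-invariant subspaces of $L^2(\T)$ (the Beurling-Helson theorem, sometimes attributed as Beurling-Wiener) and to rule out the simply invariant alternative using the assumption $|E| < |\T|$.

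First I would let $M$ denote the norm-closure in $L^2(E)$ of the set $\{pg : p \in \Po\}$ of analytic polynomial multiples of $g$. Viewing $L^2(E)$ as the subspace of $L^2(\T)$ consisting of functions vanishing a.e.\ off $E$, one sees that $M$ is a closed subspace of $L^2(\T)$ which is invariant under multiplication by $\zeta$.

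Next I would apply the Beurling-Helson dichotomy: any closed $\zeta$-invariant subspace of $L^2(\T)$ is either of the form $\phi H^2$ for some unimodular measurable function $\phi$ on $\T$ (the simply invariant case) or of the form $1_A L^2(\T)$ for some measurable subset $A \subseteq \T$ (the doubly invariant case). The hypothesis $|E| < |\T|$ rules out the first alternative: if $M = \phi H^2$ with $|\phi| = 1$ almost everywhere on $\T$, then $\phi = \phi \cdot 1$ would belong to $M \subseteq L^2(E)$, forcing $\phi = 0$ a.e.\ on $\T \setminus E$, which contradicts $|\phi|=1$ a.e.\ since $|\T \setminus E| > 0$.

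Finally, knowing $M = 1_A L^2(\T)$ for some measurable $A \subseteq \T$, the inclusion $g \in M$ gives $g = 1_A g$, so $g$ vanishes a.e.\ on $\T \setminus A$. The hypothesis that $g$ is non-zero a.e.\ on $F$ then forces $F \subseteq A$ up to a Lebesgue null set, and therefore $L^2(F) = 1_F L^2(\T) \subseteq 1_A L^2(\T) = M$, which is the desired conclusion. The only real obstacle is identifying and excluding the simply invariant branch of the dichotomy; beyond that, the argument is essentially automatic, which is why the paper treats the lemma as a \emph{direct} consequence of Beurling-Wiener.
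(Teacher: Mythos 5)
Your argument is correct and is precisely the standard deduction that the paper has in mind: the paper gives no proof of this lemma, citing Helson's book and calling it a direct consequence of the Beurling--Wiener theorem, which is exactly the shift-invariant-subspace dichotomy you invoke. Your exclusion of the simply invariant branch via $|\T \setminus E| > 0$ and the identification $F \subseteq A$ (mod null sets) from $g \neq 0$ a.e.\ on $F$ are both sound.
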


We now state the main result of this section, and the most general result of the paper.

\begin{prop} \thlabel{p2musplittheorem}
Let $\mu = \mu_\D + \mu_\T$ be a positive finite Borel measure on the closed disk $\cD$, where $d\mu_\T = w \,d\m$ is absolutely continuous, and $\mu_\D$ is such that for all bounded analytic functions $p$, the estimate \begin{equation}
    \label{Muembedding} \int_{\D} |p(z)|^2 d\mu_\D(z)  \leq c_1 \sup_{z \in \D} |p(z)|^2 \exp\Bigg( -c_2 \frac{h(1-|z|)}{1-|z|}\Bigg)
\end{equation} holds for some measure function $h$ and some positive constants $c_1, c_2$. Let $E$ be a carrier set of $w$, i.e., a measurable subset of $\T$ such that $w = w1_E$ almost everywhere, and $1_{\res(E)} \cdot \mu_\T$ be the restriction of $\mu_\T$ to the set $\res(E)$. If $|\res(E)| > 0$, then the space $L^2(1_{\res(E)} \cdot \mu_\T)$ is contained in $\Po^2(\mu)$.
\end{prop}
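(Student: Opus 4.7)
The plan is to follow Khrushchev's outer-function strategy: use the real-valued functions $\{f_n\}_n$ from \thref{fnsequenceprop} to produce outer functions in $\Po^2(\mu)$, extract a weak limit with strong vanishing properties on $\res(E)$, and then apply \thref{beurling-wiener} together with the $z$-invariance of $\Po^2(\mu)$ to obtain the full $L^2$-summand.

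For a small parameter $\epsilon>0$ to be fixed, set $\nu_n := \epsilon f_n\,dm$ and define the outer function $F_n(z) := \exp H_{\nu_n}(z)$ for $z \in \D$. Property $(ii)$ of \thref{fnsequenceprop} gives $F_n(0) = \exp(\epsilon \int f_n\,dm) = 1$; properties $(iii)$--$(iv)$ imply $f_n\leq 0$ a.e.\ on $E$, hence $|F_n|\leq 1$ on $E$; and property $(v)$ gives $|F_n|=e^{\epsilon f_n}\to 0$ a.e.\ on $\res(E)$. Property $(i)$, combined with the zero-mean of $f_n$ on each $n$-th generation dyadic interval built into the construction, allows a standard Poisson-kernel localization and an application of \thref{poissonintGrowth} to yield the crucial growth estimate
\[
|F_n(z)| \leq \exp\!\Big(C\epsilon\,\tfrac{h(1-|z|)}{1-|z|}\Big), \qquad z\in\D,
\]
for some absolute constant $C$ independent of $n$.

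Fix $\epsilon$ so small that $2C\epsilon < c_2$. Applying the standing hypothesis \eqref{Muembedding} to the bounded analytic dilates $F_n^r(z) := F_n(rz)$ and then letting $r\to 1^-$ via Fatou's lemma yields $\int_\D |F_n|^2\,d\mu_\D \leq c_1$ uniformly in $n$. Since also $\int_\T |F_n|^2\,d\mu_\T \leq \mu_\T(E)$, the sequence $\{F_n\}$ is uniformly bounded in $L^2(\mu)$. Each $F_n$ is in $H^\infty$ and is approximated in $L^2(\mu)$ by its Fej\'er means (dominated convergence separately in $\mu_\D$ and $\mu_\T$), placing $F_n\in\Po^2(\mu)$. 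By reflexivity, pass to a weak subsequential limit $G\in\Po^2(\mu)$. Bounded point evaluation at $0$ gives $G(0)=1$, so $G\not\equiv 0$; and dominated convergence on $\res(E)$, using $|F_n|\leq 1$ on $E$ together with pointwise convergence to $0$ on $\res(E)$, forces $F_n\to 0$ in $L^2(1_{\res(E)}\mu_\T)$, hence $G=0$ a.e.\ on $\res(E)$. Put $\Psi := 1 - G \in \Po^2(\mu)$; this is a nontrivial function equal to $1$ a.e.\ on $\res(E)$.

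For the conclusion, invoke $z$-invariance of $\Po^2(\mu)$: every polynomial multiple $q\cdot\Psi$ remains in $\Po^2(\mu)$. By \thref{beurling-wiener} applied to $\Psi|_\T\in L^2(E,w\,dm)$, which is nonzero a.e.\ on $\res(E)$, the $L^2(\mu_\T)$-closure of $\{q\Psi|_\T : q \text{ polynomial}\}$ contains $L^2(\res(E),w\,dm)$ (the boundary case $|E|=|\T|$ is handled by passing to a portion). The main obstacle is the final upgrade from $L^2(\mu_\T)$-density to $L^2(\mu)$-density: the $\D$-component $q(z)(1-G(z))$ of a polynomial multiple need not be small and must be forced toward zero. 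This is accomplished by a second weak-compactness extraction on a sequence $\{q_n\Psi\}\subset\Po^2(\mu)$ whose boundary traces approximate a given $\phi\in L^2(\res(E),w\,dm)$; the uniform $L^2(\mu)$-bounds, combined with the already-established vanishing of weak limits on $\res(E)$, identify the weak limit as the zero-extension of $\phi$ to $\cD$, placing $L^2(1_{\res(E)}\mu_\T)$ inside $\Po^2(\mu)$ as required.
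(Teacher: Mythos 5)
Your construction follows the paper's strategy up to the extraction of the weak limit, but it breaks at exactly the point you flag as ``the main obstacle,'' and the patch you propose there is not a proof. The source of the trouble is your choice of a \emph{fixed} exponent $\epsilon$: the growth bound $|F_n(z)|\leq\exp\big(C\epsilon\,h(1-|z|)/(1-|z|)\big)$ is then independent of $n$ and does not improve as $n\to\infty$, so the weak limit $G$ is merely some analytic function with $G(0)=1$ and that bound; since $f_n$ has a positive part on $I\setminus E$, $|F_n|$ genuinely exceeds $1$ inside $\D$ and nothing forces $G\equiv 1$ there. Consequently $\Psi=1-G$ has a nontrivial restriction to $\D$, and when you form the polynomial multiples $q_n\Psi$ demanded by \thref{beurling-wiener}, you control only their boundary traces in $L^2(\mu_\T)$; there is no uniform bound on $\int_\D|q_n\Psi|^2\,d\mu_\D$ (Beurling--Wiener says nothing about the polynomials it produces), so the ``uniform $L^2(\mu)$-bounds'' you invoke for the second weak-compactness extraction are not available. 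Moreover, the ``already-established vanishing of weak limits on $\res(E)$'' was established only for the specific sequence $\{F_n\}$, not for arbitrary bounded sequences in $\Po^2(\mu)$, so it cannot be used to identify the limit of $\{q_n\Psi\}$.

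The paper removes this obstacle before it arises by taking exponents that decay: it sets $g_k=\exp\big(H_{f_{n_k}}/k\big)$, so the growth estimate becomes $|g_k(z)|^2\leq\exp\big(\tfrac{4C}{k}\,h(1-|z|)/(1-|z|)\big)\to 1$ pointwise in $\D$. The weak limit $g$ then satisfies $|g|\leq 1$ on $\D$ together with $g(0)=1$, so the maximum modulus principle forces $g\equiv 1$ on $\D$. Hence $1-g$ vanishes identically on $\D$, every polynomial multiple $q(1-g)$ lives purely on $\mu_\T$, and the closed invariant subspace it generates sits inside $L^2(\mu_\T)$ from the start; \thref{beurling-wiener} then finishes the argument with no ``upgrade'' step needed. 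To repair your proof, replace the fixed $\epsilon$ by a sequence $\epsilon_k\to 0$ (chosen, via property $(v)$ of \thref{fnsequenceprop}, slowly enough that $\epsilon_k f_{n_k}\to-\infty$ uniformly a.e.\ on $\res(E)$) and add the maximum modulus step.
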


Before going into the proof, we make two remarks. 

\begin{remark}
Firstly, note that in the case $h(t) = t \log(e/t)$, the inequality \eqref{Muembedding} is satisfied for some small $c_2 > 0$ for any measure $\mu_\D$ of the form \begin{equation} \label{weighbergman}
d\mu_\D = (1-|z|)^\alpha dA, \quad \alpha > -1.
\end{equation} Indeed, in that case the exponential expression in \eqref{Muembedding} equals a constant multiple of $(1-|z|)^{c_2}$, and the estimate holds for $\alpha - c_2 > -1$, since 
\begin{gather*}\int_\D |p(z)|^2 (1-|z|)^\alpha dA(z) \leq \sup_{z \in \D} |p(z)|^2(1-|z|)^{c_2} \int_\D (1-|z|)^{\alpha - c_2} dA(z) \\ \leq  c_1 \, \sup_{z \in \D} |p(z)|^2(1-|z|)^{c_2},
\end{gather*} where $$c_1 = \int_\D (1-|z|)^{\alpha - c_2} dA(z) < \infty.$$      
\end{remark} 
\begin{remark}
Secondly, in fact $L^2(\res(E))$ is the biggest $L^2$-summand contained in $\Po^2(\mu)$ in the case when $w$ is the characteristic function of $E$, $h(t) = t \log(e/t)$ and $\mu_\D$ is of the form \eqref{weighbergman}. We will elaborate on this in the next section, thus proving the first of the main theorems stated in the introduction.
\end{remark}

\begin{proof}[Proof of \thref{p2musplittheorem}]
We start by noting that if $g$ is a bounded analytic function in $\D$, then the function in $L^2(\mu)$ which coincides with $g$ in $\D$ and (almost everywhere) with the usual non-tangential boundary values of $g$ on $\mu_\T$, is actually contained in $\Po^2(\mu)$. This follows easily by dilating $g_r(z) := g(rz)$ and approximating the dilated functions uniformly in $\cD$ by analytic polynomials. 

Let $\{f_n\}_n$ be the sequence of real-valued bounded functions constructed in \thref{fnsequenceprop}. By property $(v)$ in that proposition, there exists an increasing sequence of integers $\{n_k\}_k$ such that \begin{equation} \label{fnOnResE}
\exp( f_{n_k}(\zeta)/k) \leq 1/k, \text{ for almost every  } \zeta \in \res(E).    
\end{equation}
We construct the bounded analytic functions \begin{equation}
    g_k(z) := \exp\big( H_{f_{n_k}}(z)/k\big), \quad z \in \D
\end{equation} By property $(i)$ in \thref{fnsequenceprop} and \thref{poissonintGrowth}, for sufficiently large $k$ the functions $g_k$ satisfy a growth estimate \begin{equation}
    \label{hkgrowthestimate} |g_k(z)|^2 \leq \exp\Big(\frac{4C}{k} \frac{h(1-|z|)}{1-|z|}\Big) \leq \exp\Big(c_2 \frac{h(1-|z|)}{1-|z|}\Big), \quad z \in \D.
\end{equation} The functions $g_k$ are bounded analytic functions in $\D$, so they are contained in $\Po^2(\mu)$ in the sense explained at the beginning of the proof. In terms of boundary values, the functions $g_k(z)$ converge uniformly almost everywhere to 0 on $\res(E)$, and by property $(iii)$ in \thref{fnsequenceprop}, we have $|g_k(z)| = 1$ almost everywhere on $\co(E)$, thus $|g_k(z)| \leq 1$ almost everywhere on $\mu_\T$. Keeping in mind the hypothesis \eqref{Muembedding} stated in the lemma, we conclude that $\{g_k\}_k$ is a norm-bounded sequence in $\Po^2(\mu)$. The estimate \eqref{hkgrowthestimate} shows also that the family $\{g_k\}_k$ is pointwise bounded uniformly on compact subsets of $\D$, hence we can extract a subsequence of $\{g_k\}_k$ which converges uniformly on compacts to some analytic function and also converges weakly to a function $g \in \Po^2(\mu)$. The function $g$ vanishes almost everywhere on $\res(E)$, by the uniform convergence of the functions $g_k$ to zero on $\res(E)$. By property $(ii)$ of \thref{fnsequenceprop}, the functions $g_k$ satisfy $g_k(0) = 1$, and so $g(0) = 1$. Next, the inequality in \eqref{hkgrowthestimate} shows that $$|g(z)| = \lim_{k \to \infty} |g_k(z)| \leq \lim_{k \to \infty} \exp\Big(\frac{4C}{k} \frac{h(1-|z|)}{1-|z|}\Big) = 1.$$ By the maximum modulus principle for analytic functions it follows now that $g(z) = 1$, for all $z \in \D$. 

Consider now $1-g \in \Po^2(\mu)$. This function lives only on $\mu_\T$, and it equals to $1$ almost everywhere on $\res(E)$. It follows that the entire subspace $\mathcal{M} \subseteq L^2(\mu_\T)$, which is the norm-closure of analytic polynomial multiples of $1-g$, is contained in $\Po^2(\mu)$. The linear mapping $T$ defined by $f \mapsto f\sqrt{w}$ is a unitary map between $L^2(\mu_\T)$ and $L^2(E)$, and it preserves subspaces invariant under multiplication by analytic polynomials. If $|\res(E)| > 0$, then certainly $|E| < |\T|$, so \thref{beurling-wiener} applies to tell us that $T\mathcal{M}$ equals $L^2(F)$, for some measurable subset $F$ which contains $\res(E)$. By applying the inverse mapping $T^{-1}$ to $L^2(F)$ we get that $L^2(1_{\res(E)} \cdot \mu_\T)$ is contained in $\Po^2(\mu)$.
\end{proof}

\section{Proof of Theorem A}

\label{biggestsummandsection}

We specialize to the case \begin{equation} \label{hform}
    h(t) = t\log(e/t).
\end{equation} For this choice, the $h$-Carleson sets are also known as Beurling-Carleson sets, or simply Carleson sets, as was noted in the introduction. We specialize also to the measures $\mu$ supported on the closed disk $\cD$ which are of the form \begin{equation}
    \label{muform}
    d\mu = d\mu_\D + d\mu_\T = (1-|z|)^{\alpha} dA + 1_E d\m,
\end{equation} where $\alpha > -1$ and $E$ is any measurable subset of $\T$. 

The lemma below is a direct consequence of, for instance, \cite[Corollary 5.3]{ConstrFamiliesSmoothCauchyTransforms}. It is also more or less known as a folklore theorem, and is a consequence of Khrushchev's work \cite[Section 3]{khrushchev1978problem} in slightly greater generality than we shall present it here. However, we have not found a precise reference for these more general statements, and so we present only what is necessary to complete the proof of our main theorem. The interested reader is invited to analyze Khrushchev's proof in \cite{khrushchev1978problem} to extract a more general statement which involves other measure functions $h$.

\begin{lemma} \thlabel{p2munonsplitBC}
If $E$ is a Beurling-Carleson set, then the space $\Po^2(\mu)$ with measure $\mu$ of the form \eqref{muform} will contain no characteristic functions of any subset of $\cD$, except the trivial ones: the zero function and the characteristic function of the carrier set, i.e., $1 \in \Po^2(\mu)$. 
\end{lemma}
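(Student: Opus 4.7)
The plan is to run the Khrushchev--Khinchin--Ostrowski argument sketched in point $(v)$ of the introduction in the setting of our measure $\mu$ of the form \eqref{muform}. The weighted area piece $(1-|z|)^\alpha dA$ dominates point evaluations at every $\lambda \in \D$, so each $f \in \Po^2(\mu)$ has a well-defined analytic restriction to $\D$. If $1_A \in \Po^2(\mu)$ for some measurable $A \subseteq \cD$, this restriction is $\{0,1\}$-valued, hence constant. Replacing $1_A$ by $1-1_A$ if necessary, I may assume the restriction is identically $0$ on $\D$, so that $A \subseteq E$ modulo $\mu$-null sets and the task reduces to showing $|A|=0$. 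Choose polynomials $p_n \to 1_A$ in $\Po^2(\mu)$, and, using the pointwise bound on $\D$ supplied by $\mu_\D$ together with passing to an a.e.\ convergent subsequence on $E$, arrange that $p_n \to 0$ locally uniformly on $\D$ while $p_n(\zeta) \to 1_A(\zeta)$ for almost every $\zeta \in E$.

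Next I construct a Khrushchev-type Jordan subdomain $\Omega \subset \D$. Let $\{\ell_n\}$ be the connected components of $\T \setminus E$; over each $\ell_n$ I erect a Carleson-type tent $S_n \subset \D$ of radial depth comparable to $|\ell_n|$ and set $\Omega := \D \setminus \overline{\bigcup_n S_n}$, so that $\partial \Omega \cap \T = E$ up to a null set while $\partial \Omega \cap \D$ is the union of the inner walls of the tents. The bound $\|p_n\|_{L^2(\mu_\D)} \leq \|p_n\|_{\Po^2(\mu)}$ yields a pointwise sub-mean-value estimate $|p_n(z)| \leq C(1-|z|)^{-(\alpha+2)/2}$ on $\D$, and the $L^2(E,d\m)$ bound gives $\int_E \log^+|p_n|\,d\m$ uniformly bounded via Jensen. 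The Beurling--Carleson condition $\sum_n |\ell_n|\log(1/|\ell_n|) < \infty$ on $E$ is exactly what, for this choice of tent depths, forces the harmonic measure $\omega$ of $\Omega$ viewed from a fixed basepoint $z_0 \in \Omega$ to satisfy
\[ \int_{\partial \Omega \cap \D} \log \frac{1}{1-|z|} \, d\omega(z) \; \leq \; C \sum_n |\ell_n| \log \frac{1}{|\ell_n|} + C' \; < \; \infty, \]
as follows from standard harmonic-measure estimates on the individual Carleson tents. Combining the two bounds via a conformal map $\varphi : \D \to \Omega$, the functions $p_n \circ \varphi$ have uniformly bounded Nevanlinna characteristic on $\D$.

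The classical Khinchin--Ostrowski theorem (see \cite{havinbook}) then finishes the job: a sequence in the Nevanlinna class on $\D$ with uniformly bounded characteristic that converges to $0$ on a set of positive measure on $\partial \D$ must have almost all of its nontangential boundary values equal to $0$. Since $p_n \to 0$ locally uniformly on the interior portion $\partial \Omega \cap \D$, whose pullback under $\varphi$ has positive Lebesgue measure on $\partial \D$, this forces $p_n \to 0$ a.e.\ on the rest of $\partial \Omega$, in particular a.e.\ on $E$. Hence $1_A = 0$ a.e.\ on $E$ and $|A|=0$, as desired. The main obstacle is the harmonic-measure bookkeeping in the middle paragraph: one must calibrate the depths of the tents $S_n$ so that simultaneously $\omega(\partial S_n)$ and $\log(1/(1-|z|))$ on $\partial S_n$ behave as claimed, and then sum the resulting contributions to recognize the Beurling--Carleson sum. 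This calibration is exactly what singles out the Beurling--Carleson condition as the right hypothesis and is carried out via direct harmonic-measure estimates on individual Carleson tents along the lines of \cite[Section 3]{khrushchev1978problem}, which is the route taken by \cite[Corollary 5.3]{ConstrFamiliesSmoothCauchyTransforms} cited in the lemma.
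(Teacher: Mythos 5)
The paper itself does not prove this lemma --- it is quoted as a consequence of \cite[Corollary 5.3]{ConstrFamiliesSmoothCauchyTransforms} and of Khrushchev's harmonic-measure argument --- so your plan of reconstructing that argument is the intended route, and most of your scaffolding is right: the reduction to $A \subseteq E$ with interior limit $0$, the pointwise growth bound supplied by $\mu_\D$, the uniform bound on $\int_E \log^+|p_n|\,d\m$ via Jensen, and the entropy estimate $\int_{\partial\Omega\cap\D}\log\frac{1}{1-|z|}\,d\omega<\infty$ for tents of depth comparable to $|\ell_n|$. But the harmonic-measure half of the argument is only half done, in a way that matters. The entropy bound controls the Nevanlinna characteristic of $p_n\circ\varphi$, but to close the argument you must pass from a statement holding $\omega$-almost everywhere on $E$ back to one holding for Lebesgue measure: you need $m|_E \ll \omega|_E$, i.e.\ that every subset of $E$ of positive length has positive harmonic measure in $\Omega$. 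This is needed twice --- first so that $\varphi^{-1}(E)$ has positive Lebesgue measure on $\partial\D$ (otherwise the ``positive measure'' hypothesis of Khinchin--Ostrowski is vacuous), and second so that the conclusion $\omega(A)=0$ actually yields $|A|=0$. This mutual absolute continuity is the second, equally essential estimate in Khrushchev's construction (it is where the calibration of the tent depths earns its keep, via a sawtooth-domain or maximum-principle comparison), and your proposal never addresses it.

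Separately, your statement of the Khinchin--Ostrowski theorem is incorrect, and the final deduction as written does not parse. The theorem does not assert that a sequence of uniformly bounded characteristic converging to $0$ on one positive-measure boundary set must converge to $0$ a.e.\ on the rest of the boundary. It asserts a compatibility of limits: if the characteristics are uniformly bounded, the sequence converges locally uniformly in the disk to $f$, and converges in measure on a boundary set $F$ of positive length to $\phi$, then $\phi$ coincides a.e.\ on $F$ with the nontangential boundary values of $f$. The correct application here is direct: take $F=\varphi^{-1}(E)$ (of positive measure, by the missing estimate above), on which $p_n\circ\varphi\to 1_A\circ\varphi$ almost everywhere, while the interior limit is $0$; compatibility forces $1_A\circ\varphi=0$ a.e.\ on $F$, hence $\omega(A)=0$, hence $|A|=0$. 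Your detour through the inner tent walls is unnecessary and, as phrased, invokes a false implication.
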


We are now ready to prove our our main result, Theorem A of Section \ref{introsec}. We restate it for convenience. 

\begin{thm} \thlabel{theoremArestated}
Let $\mu$ be of the form \eqref{muform}. Assume that for $h$ of the form \eqref{hform} we have $|\res(E)| > 0$. Let $$d\mu_1 := d\mu -1_{\res(E)}d\m = d\mu_\D + 1_{\co(E)}d\m.$$ Then 
\begin{equation}\label{thomsondecompmu2}
 \Po^2(\mu) = L^2(\res(E)) \oplus \Po^2(\mu_1)
\end{equation}
where $\Po^2(\mu_1)$ contains no non-trivial characteristic functions.
\end{thm}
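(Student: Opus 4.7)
My plan is to deduce both claims of \thref{theoremArestated} from the two main technical tools already in hand: \thref{p2musplittheorem}, which produces the $L^2(\res(E))$-summand, and \thref{p2munonsplitBC}, which kills characteristic functions in the Beurling-Carleson setting. The remaining work is careful bookkeeping about how $\Po^2(\mu_1)$ embeds into $L^2(\mu)$ via extension by zero off the support of $\mu_1$.

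First I will verify that \thref{p2musplittheorem} applies. The measure $d\mu_\D = (1-|z|)^{\alpha}\,dA$ satisfies the estimate \eqref{Muembedding} for $h(t)=t\log(e/t)$ and sufficiently small $c_2>0$, as computed in the first remark following \thref{p2musplittheorem}. Taking $w = 1_E$, the restriction of $\mu_\T$ to $\res(E)$ is exactly $1_{\res(E)}\,d\m$, so that proposition yields $L^2(\res(E))\subseteq\Po^2(\mu)$. Because $d\mu = d\mu_1 + 1_{\res(E)}\,d\m$ and $\res(E)$ is disjoint from $\D\cup\co(E)$ up to $m$-null sets, the ambient space $L^2(\mu)$ splits orthogonally as $L^2(\mu_1)\oplus L^2(\res(E))$, where I identify $L^2(\mu_1)$ with the subspace of $L^2(\mu)$ consisting of functions vanishing on $\res(E)$.

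Next I establish the two-way containment in \eqref{thomsondecompmu2}. For $\supseteq$: given $f\in\Po^2(\mu_1)$ with approximating polynomials $p_n\to f$ in $L^2(\mu_1)$, I set $q_n := p_n - p_n\cdot 1_{\res(E)}$. Each $q_n\in\Po^2(\mu)$ because $p_n$ is a polynomial and $p_n\cdot 1_{\res(E)}\in L^2(\res(E))\subseteq\Po^2(\mu)$. A direct calculation gives $\|q_n - q_m\|_{L^2(\mu)} = \|p_n - p_m\|_{L^2(\mu_1)}\to 0$, so the $q_n$'s converge in $\Po^2(\mu)$ to the zero-extension of $f$. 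For $\subseteq$: given $f\in\Po^2(\mu)$ with polynomial approximants $p_n\to f$ in $L^2(\mu)$, I decompose each $p_n = p_n\cdot 1_{\res(E)} + q_n$ into its two orthogonal components. Both components are Cauchy in $L^2(\mu)$, so $p_n\cdot 1_{\res(E)}\to f_1\in L^2(\res(E))$ and $q_n\to f_2\in L^2(\mu)$; since $q_n$ agrees with the polynomial $p_n$ off $\res(E)$, one also has $p_n\to f_2$ in $L^2(\mu_1)$, so $f_2\in\Po^2(\mu_1)$ and $f = f_1+f_2$ is the desired decomposition.

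Finally, to show that $\Po^2(\mu_1)$ admits no non-trivial characteristic functions, I exploit the structure of $\co(E)$ given by \thref{Edecomp}: $\co(E) = \bigcup_n G_n$ for an increasing sequence of Beurling-Carleson sets $G_n$. Setting $d\nu_n := d\mu_\D + 1_{G_n}\,d\m$, \thref{p2munonsplitBC} asserts that $\Po^2(\nu_n)$ contains no non-trivial characteristic functions. Since $\nu_n\leq\mu_1$, any polynomial sequence converging to a candidate $1_S\in\Po^2(\mu_1)$ in $L^2(\mu_1)$-norm also converges in $L^2(\nu_n)$, so $1_S\in\Po^2(\nu_n)$ and its restriction to $\D\cup G_n$ must be identically $0$ or identically $1$. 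Consistency across $n$ then forces $1_S$ to be one of the two trivial characteristic functions of the $\mu_1$-carrier $\D\cup\co(E)$. The genuine analytic content has been invested in \thref{p2musplittheorem} and \thref{p2munonsplitBC}; the only delicate step is the orthogonal-projection bookkeeping above, which crucially uses $L^2(\res(E))\subseteq\Po^2(\mu)$ to conclude that the truncations $q_n = p_n - p_n\cdot 1_{\res(E)}$ already lie in $\Po^2(\mu)$.
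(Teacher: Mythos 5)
Your proposal is correct and follows essentially the same route as the paper: \thref{p2musplittheorem} supplies the $L^2(\res(E))$-summand, and \thref{p2munonsplitBC} applied to an exhaustion of $\co(E)$ by Beurling--Carleson sets rules out non-trivial characteristic functions in $\Po^2(\mu_1)$. The only difference is that you spell out the zero-extension/orthogonal-projection bookkeeping that the paper dismisses as obvious, and you handle the ``identically $1$'' case by consistency across $n$ rather than by the paper's preliminary reduction (via analyticity on $\D$) to a characteristic function $1_G$ with $G\subseteq\co(E)$; both are fine.
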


\begin{proof}
By \thref{p2musplittheorem} and the remarks appearing after its statement, the space $\Po^2(\mu)$ contains $L^2(\res(E))$. The orthogonal complement of this space in $\Po^2(\mu)$ is obviously $\Po^2(\mu_1)$, the point of contention is only if $\Po^2(\mu_1)$ contains a non-trivial characteristic function or not. It is easy to see that if $\Po^2(\mu_1)$ contains a non-trivial characteristic function, then it will contain the characteristic function $1_G$ of some measurable subset $G$ of $\co(E)$, since the functions in $\Po^2(\mu)$ are analytic in $\D$. 

Let $\co(E) = \cup_n {E_n}$, where the $E_n$ are Beurling-Carleson sets. Fix a sequence of analytic polynomials $\{p_n\}_n$ which converges to $1_G$ in the norm of $\Po^2(\mu_1)$. Then this sequence converges also in the norm induced by the smaller measures $d\mu_n := d\mu_\D + 1_{E_n} d\m$, to the characteristic function $1_{G \cap E_n}$. By \thref{p2munonsplitBC} the sequence must converge to $0$, i.e., $|G \cap E_n| = 0$. Since this is true for all $n$, $|G \cap \co(E)| = 0$, and so $|G| = 0$.
\end{proof}

We have thus completed the proof of Theorem A of the introduction.

\section{Proof of Theorem B}

\label{proofTBsec}

Theorem B stated in the introduction will follow easily from previous results found in \cite{ConstrFamiliesSmoothCauchyTransforms}, and from the next lemma.

\begin{lemma} \thlabel{lemmaForTheoremB}
Let $E$ be a measurable subset of $\T$, and $f \in L^2(E)$ be such that the Cauchy transform $C_f$ has a finite Dirichlet integral:
$$ \int_\D |C'_f|^2 dA < \infty.$$
Then $f$ vanishes almost everywhere on $\resLOG(E)$.
\end{lemma}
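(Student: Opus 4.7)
The plan is to apply Theorem A with parameter $\alpha=0$, so that $\mu = dA + 1_E \, d\m$, and to exploit the inclusion $L^2(\resLOG(E)) \subseteq \Po^2(\mu)$ granted by that theorem. The idea is to manufacture analytic polynomials that simultaneously vanish on $\D$ in the Bergman sense and converge to a prescribed boundary function on $\resLOG(E)$, and to pair these against $\bar f$ via a Bergman--Dirichlet duality that digests the hypothesis on $C_f$.

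First, I would fix an arbitrary $\phi \in L^2(\resLOG(E))$ and invoke Theorem A to select a sequence of analytic polynomials $p_j$ with $p_j \to \phi$ in $L^2(\mu)$. This produces two simultaneous convergences: $\int_\D |p_j|^2\,dA \to 0$, since $\phi$ vanishes on $\D$, and $\int_E |p_j - \phi|^2\,d\m \to 0$. Writing $p_j(z) = \sum_{n \geq 0} a_n^{(j)} z^n$, the computation $\int_\D |z|^{2n}\,dA = \pi/(n+1)$ converts the first convergence into $\sum_{n \geq 0} |a_n^{(j)}|^2/(n+1) \to 0$ as $j \to \infty$.

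Next, I would evaluate $\int_\T p_j \bar f\,d\m$ in two ways. On the one hand, since $f$ is supported in $E$ and $p_j \to \phi$ in $L^2(E)$, Cauchy--Schwarz in $L^2(E)$ gives
\[\int_\T p_j \bar f\,d\m \;=\; \int_E p_j \bar f\,d\m \;\longrightarrow\; \int_{\resLOG(E)} \phi \bar f\,d\m.\]
On the other hand, splitting $f = f^+ + f^-$ into its analytic and strictly antianalytic Fourier pieces and using that $p_j$ is analytic (so every monomial in $p_j \overline{f^-}$ has strictly positive frequency and thus $\int_\T p_j \overline{f^-}\,d\m = 0$), we obtain $\int_\T p_j \bar f\,d\m = \sum_{n \geq 0} a_n^{(j)} \overline{f_n}$, where $f_n$ are the Fourier coefficients of $f$. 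A weighted Cauchy--Schwarz inequality then yields
\[\Big|\sum_{n \geq 0} a_n^{(j)} \overline{f_n}\Big|^2 \;\leq\; \Big(\sum_{n \geq 0} \frac{|a_n^{(j)}|^2}{n+1}\Big)\Big(\sum_{n \geq 0}(n+1)|f_n|^2\Big),\]
whose first factor tends to zero by the previous paragraph, while the second is finite: the Dirichlet hypothesis reads $\pi\sum_{n \geq 1} n|f_n|^2 = \int_\D |C_f'|^2\,dA <\infty$, and $\sum_{n \geq 0}|f_n|^2 \leq \|f\|_{L^2(\T)}^2$. Therefore $\int_\T p_j \bar f\,d\m \to 0$.

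Combining the two evaluations forces $\int_{\resLOG(E)} \phi \bar f\,d\m = 0$ for every $\phi \in L^2(\resLOG(E))$, which gives $f = 0$ a.e.\ on $\resLOG(E)$. The only point I expect to require a moment's care is the second evaluation of the pairing: one must notice that the support constraint on $f$ combined with the analyticity of $p_j$ eliminates the $f^-$ contribution and reduces the pairing to exactly the Bergman--Dirichlet duality that absorbs the hypothesis on $C_f$. Once this is observed, the rest is routine Hilbert-space manipulation, and the whole argument is essentially dual to the construction of the splitting in Theorem A.
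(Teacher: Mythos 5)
Your proposal is correct and follows essentially the same route as the paper: it invokes Theorem A with $\alpha=0$ to approximate an arbitrary element of $L^2(\resLOG(E))$ by polynomials small in the Bergman norm, and then uses the weighted Cauchy--Schwarz pairing between the Bergman norm of the polynomial coefficients and the sum $\sum_n (n+1)|f_n|^2$ controlled by the Dirichlet integral of $C_f$. The only difference is presentational (a convergent sequence $p_j$ versus the paper's single $\epsilon$-good polynomial), so there is nothing substantive to add.
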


\begin{proof} A simple computation shows that if $g$ is an analytic function in $\D$, and $\{g_n\}_n$ is the sequence of its Taylor coefficients, then we have \begin{equation}
    \label{BergmanIsomorphism}
    \int_\D |g|^2 dA = \sum_{n=0}^\infty \frac{|g_n|^2}{n+1}.
\end{equation} Thus, if $f(\zeta) = \sum_{n \in \mathbb{Z}} f_n\zeta^n$ is the Fourier expansion of $f$, then the finiteness of the Dirichlet integral of $C_f$ is equivalent to \begin{equation}
    \sum_{n=0}^\infty (n+1)|f_n|^2 < \infty.
\end{equation}
Let $p(z) = \sum_{n=0}^N p_nz^n$ be any analytic polynomial. We can use Cauchy-Schwarz inequality to estimate  \begin{gather} 
    \Big\vert \sum_{n=0}^N f_n \conj{p_n} \Big\vert \leq \Big(\sum_{n=0}^\infty (n+1)|f_n|^2\Big)^{1/2}\Big(\sum_{n=0}^N \frac{|p_n|^2}{n+1} \Big)^{1/2} \nonumber \\ \label{est1}
    \lesssim \Big(\int_\D |C'_f|^2 dA \Big)^{1/2}\Big(\int_\D |p|^2 dA \Big)^{1/2}
\end{gather}
Now consider $h \in L^2(\resLOG(E))$. By Parseval's identity, and the fact that $f$ lives only on the set $E$, we have \begin{gather} 
    \int_E f\conj{h} \, d\m =  \int_E f\conj{(h-p)} \, d\m +  \int_E f\conj{p} \, d\m \nonumber \\ \label{est3} = \int_E f\conj{(h-p)} + \sum_{n=0}^N f_n \conj{p_n}.
\end{gather} 
By \thref{theoremArestated} for the case $\alpha = 0$, for any $\epsilon > 0$ we can find an analytic polynomial $p$ which satisfies \begin{equation} \label{pncond1}
    \int_{\D} |p|^2 dA < \epsilon
\end{equation} and 
\begin{equation}
    \label{pncond2}
    \int_E |p_n - h|^2 d\m < \epsilon.
\end{equation}
Consequently, a combination of \eqref{est1}, \eqref{est3}, \eqref{pncond1} and \eqref{pncond2} shows that $$\int_E f\conj{h} \, d\m = 0,$$ for any $h \in L^2(\resLOG(E))$. Thus $f$ must vanish almost everywhere on $\resLOG(E)$.
\end{proof}

\begin{proof}[Proof of Theorem B]
A set of functions in $S_+(E)$ which live only on $\coLOG(E)$ and are norm-dense in $L^2(\coLOG(E))$ has been constructed in \cite{ConstrFamiliesSmoothCauchyTransforms}. Conversely, \thref{lemmaForTheoremB} shows that if the Cauchy transform $C_f$ of any function $f \in L^2(E)$ has a finite Dirichlet integral, then $f$ lives on $\coLOG(E)$. The theorem is thus proved.
\end{proof}

\section{Proof of Theorem C}
\label{proofTCsec}

Let $D$ be the space of analytic functions in $\D$ with a  finite Dirichlet integral in \eqref{Dirichletintegral}. We shall give two proofs of Theorem C stated in the introduction.

Firstly, the validity of the claim in Theorem C follows immediately from \thref{p2musplittheorem} and the main result in \cite{DBRpapperAdem} which characterizes the norm-density of $D \cap \hb$ in $\hb$ in terms of certain operator-theoretic properties of the multiplication operator $M_z: \Po^2(\mu) \to \Po^2(\mu)$ for the measure \begin{equation} \label{muforDbrthm}
    d\mu = dA + (1-|b|^2)d\m,
\end{equation} the operator $M_z$ being one of multiplication by the coordinate function z: $$M_zf(z) = zf(z).$$ A necessary condition for density of $D \cap \hb$ is, according to \cite{DBRpapperAdem}, that $M_z:\Po^2(\mu) \to \Po^2(\mu)$ admits non non-trivial invariant subspace on which it acts as an isometry. Since the set \begin{equation}
    \label{EsetDBRproof}
    E := \{ \zeta \in \T : |b(\zeta)| < 1 \}
\end{equation} is the carrier set on $\T$ of the  weight $1-|b|^2$, \thref{p2musplittheorem} implies that if $|\resLOG(E)| > 0$, then $\Po^2(\mu)$ admits such a subspace. So in that case $D \cap \hb$ cannot be dense in $\hb$.

We can however give a proof of Theorem C which utilizes Theorem B and only the classical theory of $\hb$-spaces, passing around the abstract formulation in \cite{DBRpapperAdem}. Since we have already seen one proof, we give only a sketch of the next argument. For references to the facts about $\hb$s-spaces used below, one can consult \cite{hbspaces1fricainmashreghi} and \cite{hbspaces2fricainmashreghi}.

\begin{proof}[Sketch of an alternative proof of Theorem C]
Let $\Delta(\zeta) := \sqrt{1-|b(\zeta)|^2}, \zeta \in \T$. 
It is well-known that for each $f \in L^2(E)$, with $E$ given by \eqref{EsetDBRproof}, the Cauchy transform $C_{f\Delta}$ of the function $f\Delta$ (see \eqref{Cfeq}) is contained in $\hb$. The linear manifold of all such functions is usually denoted by $\hil(\conj{b})$ in the literature. If $b$ is a so-called extreme point of the unit ball of $H^\infty$ (which is the only case of interest for us), then it is known that the transform $f \mapsto  C_{f\Delta}$ is one-to-one on $L^2(E)$, and we have \begin{equation}
    \label{normineqDBRproof} \|C_{f\Delta}\|_{\hb} \leq \|C_{f\Delta}\|_{\hil(\conj{b})} := \|f\|_{L^2(E)}.
\end{equation} If $T_{\conj{b}}$ is the usual Toeplitz operator with the co-analytic symbol $\conj{b}$, then $T_{\conj{b}}: \hb \to \hil(\conj{b})$ is a contraction, in particular it is a bounded operator. It is also known that the space $D$ is invariant for any Toeplitz operator with a bounded co-analytic symbol.

Assume that $D \cap \hb$ is dense in $\hb$. Our task is then to show that the set $E$ given by \eqref{EsetDBRproof} is residual-free. Let $\{g_n\}_n$ be a sequence in $D \cap \hb$ which tends to the function $C_{1_E\Delta} \in \hb$ in the norm of $\hb$. By the invariance of $D$ for the operator $T_{\conj{b}}$, it follows that the sequence of $D$-functions $T_{\conj{b}}g_n$ tends to $T_{\conj{b}}C_\Delta = C_{\conj{b}\Delta}$ in the norm of $\hil(\conj{b})$. Since the functions $T_{\conj{b}}g_n$ are in $\hil(\conj{b})$, they are Cauchy transforms of the form $T_{\conj{b}}g_n = C_{f_n\Delta}$, for some $f_n \in L^2(E)$. By the norm-equality in \eqref{normineqDBRproof} we have that $f_n \to \conj{b}$ in $L^2(E)$, and so we can assume that $f_n \to \conj{b}$ pointwise almost everywhere on $E$ by passing to a subsequence. By Theorem B and the fact that $C_{f_n\Delta} = T_{\conj{b}}g_n \in D$ we must have $f_n \equiv 0$ on $\resLOG(E)$. But $\conj{b} \neq 0$ almost everywhere on $E$ (since $b$ is analytic), so we see that $\resLOG(E)$ must have Lebesgue measure zero, as claimed in the statement of Theorem C.
\end{proof}

\bibliographystyle{siam}
\bibliography{mybib}

\Addresses

\end{document}